\DeclareMathOperator{\sgn}{sgn}
\def\qed{\hbox{${\vcenter{\vbox{		 
   \hrule height 0.4pt\hbox{\vrule width 0.4pt height 6pt
   \kern5pt\vrule width 0.4pt}\hrule height 0.4pt}}}$}}
\def\THEN{\texttt{THEN}}
\def\IF{\texttt{IF}}
\def\ELSE{\texttt{ELSE}}
\def\REPEAT{\texttt{REPEAT}}
\def\END{\texttt{END}}
\def\UNTIL{\texttt{UNTIL}}
\def\AND{\texttt{AND}}
\def\TRUE{\texttt{TRUE}}
\def\WHILE{\texttt{WHILE}}
\def\cH{\mathcal H}
\def\bP{\mathbb P}
\def\eps{\varepsilon}
\def\eps{\varepsilon}
\def\ER{Erd\H{o}s-R\'enyi }
\def\diag{\mathop{\rm diag}}
\newcommand{\fdp}[1]{\textcolor{black}{#1}}
\newcommand{\stef}[1]{\textcolor{black}{#1}}
\newcommand{\fdpe}[1]{\textcolor{black}{#1}}
\newcommand{\steff}[1]{\textcolor{black}{#1}}
\newcommand{\rev}[1]{\textcolor{black}{#1}}
\newcommand{\fdpr}[1]{\textcolor{black}{#1}}
\begin{document}

\title{Optimal curing policy for epidemic spreading over a community network with heterogeneous population}

\shorttitle{Optimal curing policy for epidemic spreading over a community network} 
\shortauthorlist{S. Ottaviano, F. De Pellegrini, S. Bonaccorsi, P. Van Mieghem} 

\author{{
\sc Stefania Ottaviano}
\\[2pt]
Fondazione Bruno Kessler, Via alla Cascata 56/d, 38123 Povo (Trento), Italy\\
Mathematics Department, University of Trento,  Via Sommarive 14, 38123 Povo (Trento), Italy\\[2pt]
{\sc Francesco De Pellegrini}\\[2pt]
Fondazione Bruno Kessler, Via alla Cascata 56/d, 38123 Povo (Trento), Italy\\[2pt]
 {\sc Stefano Bonaccorsi}\\[2pt]
Mathematics Department, University of Trento,  Via Sommarive 14, 38123 Povo (Trento), Italy\\[2pt]
{\sc and}\\[6pt]
{\sc Piet Van Mieghem} \\[2pt]
EEMCS, Delft University of Technology, Mekelweg 4 2628 CD Delft, Netherlands}

\maketitle

\begin{abstract}
{The design of an efficient curing policy, able to stem an epidemic process at an affordable cost, has to account for the structure of the population contact network supporting the 
    contagious process. Thus, we tackle the problem of   
allocating recovery resources among the population, at the lowest cost possible to prevent the epidemic from persisting indefinitely in the network. Specifically, we analyze a susceptible--infected--susceptible
    epidemic process spreading
    over a weighted graph, by means of a first-order mean-field approximation. First, we describe the influence of the contact network on the dynamics of the epidemics among a heterogeneous population, that is possibly divided into communities.
		For the case of a community network, our investigation relies on the graph-theoretical notion 
of equitable partition;  
	we show that the epidemic threshold, a key measure of the network robustness against
epidemic spreading, can be determined using a lower-dimensional 
dynamical
 system. 
Exploiting the computation of the epidemic threshold, we determine a cost-optimal curing policy by solving a convex minimization problem, which possesses a reduced dimension in the case of a community network. 
Lastly, we consider a two-level optimal curing problem, for which an algorithm is designed 
with a polynomial time complexity 
 in the network size.}
{heterogeneous SIS model, community network, graph spectra, equitable partitions, convex optimization.}
\\
2000 Math Subject Classification: 82C20, 34A34, 34D23, 90C22, 90C25  
\end{abstract}

\section{Introduction}\label{intro}

The \fdp{diffusion} and persistence of infectious diseases depend on complex
 interactions between individual \fdp{units (namely people, cities, countries, etc.),
   the characteristics of a disease and, possibly, on the applied control policies.
   The last ones are aimed at arresting disease transmission or render the infection
   prevalence as low as possible. } 
	

 Epidemic models have been used to describe a wide range of other phenomena \fdp{as well}, like social
 behaviors, diffusion of information, computer viruses, etc., indeed, although the basic mechanisms
 of these phenomena can be different, often their dynamical behavior can be described by the same
 type of equations \cite{PietSurvey}. \fdp{One of the main objectives, in all these domains, is
  to gain insight into how the spreading process transmits and to identify the most effective strategies
  in order to prevent and control them.}

%
In
 controlling the diffusion processes, the structure of the contact network plays a
 crucial role. \fdp{In particular, several contact
   networks appear organized into communities}.
	%
In this framework, a uniform control strategy not always represents the most effective way to
 reduce the infection rate, the number of affected individuals or the time of extinction~\cite{wan2008designing,prakash2013fractional,borgs2010distribute}. \steff{Furthermore, curing costs may vary from node to node.} In the case of community networks, curing costs \fdp{may vary depending on the features of the specific community
   where curing controls are applied.} 

\fdp{Thus, by taking into} account the topology of a community network, in this work we want to determine a cost-optimal distribution of resources, that is able to prevent the disease from persisting
 indefinitely in the population. \steff{The non-uniform distribution of resources aims to control, in a cost-optimal way, the level of the nodes local curing rates. Increasing the curing rates of, e.g., some selected communities, is reflected into speeding up their detection capabilities and treatments (or, into installing better virus scan software, in the
case of computer viruses) \cite{wan2008designing}}.

~\steff{The problem of designing strategies
  to \fdp{stop spreading processes} in networks has been largely tackled.} \fdp{Though, in this context,} \steff{to the best of our knowledge}, \fdp{very few
  works have described how to exploit the community structures in order to} \steff{formulate an optimization problem for resources allocation, with lower complexity}.
Based on the theory of contact processes,~\steff{Borgs et al. \cite{borgs2010distribute} characterize the optimal
  distribution of a fixed amount of antidote in a given network.
Gourdin et al. \cite{gourdin2011optimization} and Sahnen et al. \cite{sahneh2012optimal} take advantage of the
$N$-intertwined approximation \cite{VanMieghem2009} to analyze and control the spread of a SIS epidemic model.
The same mean-field approach is adopted by Preciado et al in \cite{Pappas}, where \fdp{the authors propose}
a semidefinite programming (SDP) approach for optimal network immunization.} \fdp{Cost-optimal vaccine allocation
in arbitrary undirected networks are obtained via the minimization of a vaccination cost function which depends
on infection rates.} In \cite{preciado2014optimal}, the same authors specialize some specific instances of
optimal network protection problem, via Geometric Programming techniques, to weighted, directed, strongly
(and not necessarily strongly) connected networks to compute the cost and speed optimal allocation of vaccines
and/or antidotes. In Sec. \ref{opt}, we analyze more in detail the differences between their and our approach.
 \steff{Enyioha et al. \cite{enyioha2015distributed} propose a distributed solution to the vaccine and
  antidote allocation problem to contain an epidemic outbreak in the absence of a central social planner.
  Each node locally computes its optimum investment in vaccine and antidotes needed to globally contain
  the spread of an outbreak, via local exchange of information with neighbors. %
  Drakopoulos et. al~\cite{drakopoulos2015network,drakopoulos2015lower} consider the propagation
  of an epidemic process \fdp{over a} network and study the problem of dynamically allocating a
  fixed curing budget to the nodes of the graph. The objective is to minimize the expected
  extinction time of the epidemics. In the case of bounded degree graphs, they provide a lower bound
  on the expected time to extinction under any such dynamic allocation policy}.
	
\steff{\subsection{Outline and main results}}
\fdp{Compared to previous works on optimal curing policy, we are interested, particularly, in leveraging
  the subdivision of the population into communities. The motivation comes from the fact that
  community structures are a relevant non-trivial topological feature of complex networks.
  Community structures have been identified as a typical feature of social networks, tightly connected groups of
nodes in the World Wide Web usually correspond to
pages on common topics, whereas
  in the biology framework, e.g., in cellular and genetic networks, communities may
  relate to functional modules~\cite{boccaletti2006}.} 
\fdp{Consequently, in many practical situations, it appears reasonable to consider curing policies which apply {\em per 
community} (i.e., per hospital, school, village, or city, etc,...), rather than policies which apply 
  per individual unit.}

\steff{In particular, we consider a continuous-time susceptible--infected--susceptible (SIS) epidemic process, where an individual can be repeatedly infected, 
recover and yet be infected again.
An input weighted graph captures the interaction between individuals and communities, where
  the heterogeneity, and possible asymmetries in the contagiousness, are caught by edge-dependent weights.} 

\steff{Our investigation, on a population divided into communities, has been based on the graph-theoretical notion of \emph{equi\-table partition}~\cite{Schwenk,godsil,EvolDelio}. 
A network with an equitable partition of its node set posses some interesting symmetry properties; we will use the word ``symmetry'' to refer to a certain structural regularity of the graph connectivity \cite{EvolDelio}.
We take advantage of the notion of equitable partitions for providing curing policies, diversified for communities, capable to lead the system to extinction, at the minimum cost.} In this context, our main goal is that we are able to formulate a convex cost minimization problem with a reduced dimension, with respect to the general case, where curing policies are providing for each node.

\steff{Spatial inhomogeneity has been incorporated in other mo\-dels to study the epidemic control \cite{watts2005multiscale,may1984spatial,riley2003transmission}, however not much effort has been made to explore inhomogeneous control strategies within this kind of models \cite{wan2008designing}}. \steff{The problem of an inhomogeneous allocation of limited resources for a multi-group model, 
has been studied, instead, in
  \cite{wan2008designing} by Wan et al. The aim of the authors is to maximize the speed at which the virus is eliminated. Thus, considering a discrete-time epidemic model, they tackle the problem to minimize the dominant eigenvalue of a system matrix, subject to limiting constraints on
  some system parameters to be controlled.}
\steff{Compared to their formulation, we want
to allocate resources to the communities, sufficient to lead the system towards the epidemic extinction, with the aim of minimizing a certain
  cost function.
	Moreover, 
	in the cited paper, individuals transmit the disease
  through homogeneous mixing within their own group, as well as 
interactions with individuals in other groups,
like in the usual metapopulation models. Such model is only a specific case of our network model, in fact, by the notion of equitable partition, we go beyond the full mesh assumption, within the communities, as well as outside, thus providing results for wider possible scenario (see Section \ref{sec:comm}).}\\\\


%
%
%



The paper is organized as follows.
In Sec. \ref{enm}, first, we review some background concepts for epidemic processes on networks in the homogeneous
  setting and the related mean-field approximation adopted in the paper.
	Then, we provide the adaptation of the model to the heterogeneous setting and we report on the analysis of the global dynamics of the epidemic process.
	This allows to recognize the stability modulus
of a matrix, encoding for the network structure
and for the parameters of the model, as the critical value separating an absorbing phase, from an endemic phase.  
Leveraging on this result, in Sec. \ref{opt}, we present the cost-optimal resource allocation problem. We use a semidefinite approach to
	%
 formulate our optimization problem for the
case of arbitrary undirected graphs with symmetric weights. Then, we show that this approach can be extended to some kind of not symmetric weighted networks, those whose
adjacency matrix is diagonally symmetrizable. Moreover, for the case of a general
directed weighted graph, we provide a suboptimal solution. 
In Sec \ref{sec:comm} we consider the case when a contact network is shaped 
by an existing community network.  
First, we extend the results in \cite{QEP} (related to equitable partitions in the case of undirected graphs), considering equitable partitions for directed weighted graphs. 
Specifically, we show how a certain kind of structure regularity, in a directed weighted graph, influences the system of differential equations 
that solve for the evolution in time of the approximated infection probability of each node.
Then, we exploit such regularities in graph connectivity for reducing the original dynamical system to a lower dimensional one. By supposing that different curing rates can be chosen depending on the community network structure and that they can be 
optimized for a certain cost function, the latter system is used to reduce the dimension of our optimization problem. 
In the last part of the work, we propose a two-level optimal curing problem, that is, we have
a two-dimensional curing policy, suitable, e.g., when the population can
be divided in two categories (young and elders, male and female, etc,...).
This kind of situation fits well certain networks with equitable partition, such as, e.g., bipartite graphs and interconnected star networks.
In this case we provide a scalable bisection algorithm, that yields an $\varepsilon$-approximation of the optimal solution, in polynomial time in the input size. Finally, we carry out some numerical
experiments.
Proofs not included in previous sections for
better readability are placed in Section \ref{app}.



\section{The epidemic network model}\label{enm}
\stef{In this section, we report some background concepts and new tools that we will use later to find a cost-optimal curing policy}.

Let us consider a SIS epidemic process spreading over a simple 
undirected graph $G=(V,E)$, with edge set 
$E$ and node (vertex) set $V$. The order of $G$, denoted by $N$, is the cardinality of $V$. The edge set of $G$ consists of unordered pairs   $\left\{i,j\right\}$, with $i,j \in V$, and $i \neq j$. Connectivity 
of graph $G$ is conveniently expressed by the symmetric $N \times N$ adjacency matrix $A$.

The viral state of a node $i$, at time $t$, is described 
by a Bernoulli random variable $X_i(t)$, where we set $X_i(t) = 0$, if $i$ is healthy and $X_i(t) = 1$, if $i$ is infected. 
Every node at time $t$ is either infected with probability $p_i(t) = \bP(X_i(t) = 1)$ or healthy (but susceptible) with probability $ 1 - p_i(t)=\bP(X_i(t) = 0) $. \stef{In the homogeneous setting},
the recovery process is a Poisson process with rate $\delta$, and the infection process is a \textsl{per link} 
Poisson process where the infection rate between an healthy and an infected node is $\beta$.
All the infection and recovery processes are independent. 
The SIS process, developing a graph with $N$ nodes, can be modeled
as a continuous-time Markov process with $2^N$ states, covering all possible
combinations in which $N$ nodes can be infected \cite{VanMieghem2009}.
The probability
of the process of being in a certain state can be uniquely determined by the Kolmogorov's differential equations (i.e. a
system of linear differential equations). However, the number
of equations increases exponentially with the number of nodes; this poses
several limitations in order to determines the set of solutions even for small
network order. Hence, often, it is necessary to derive models that are an
approximation of the exact original one \cite{VanMieghem2009,scoglio}.
 
 In this work, we consider a first order mean-field approximation (NIMFA), proposed by Van Mieghem et. al. in \cite{VanMieghem2009}. Basically, NIMFA replaces the original $2^N$ linear differential equations by $N$
non-linear differential equations representing the time-change of the infection probability of each
node.


\textit{Epidemic threshold.}
For a network with finite order $N$, the exact SIS Markov process will always converge
towards its unique absorbing state, that is the zero-state where all nodes are healthy.
\stef{However, the process shows a phase transition behavior: indeed, there is a critical value $\tau_c$ of the effective 
spreading rate $\tau= \beta/\delta$, whereby if $\tau$ is lesser than $\tau_c$, the initial infection
dies out quickly. \fdp{Conversely,} for $\tau$ larger than $\tau_c$, the infection spreading
can last very long in any sufficiently large network \cite{VanMieghem2013, Draief2010, van2016approximate}.
The regime of persistent infection ($\tau > \tau_c$ ), called the metastable or quasi-stationary state, is
reached rapidly given an initial set of infected nodes and can persist for very long time \cite{van2016approximate}.}
\stef{In support of this, numerical simulations of SIS processes reveal that, even for fairly small
networks $(N \simeq 100)$ and when $\tau > \tau_c$, the overall-healthy state is only reached 
after an unrealistically long time. Hence, the indication of the model is that, in the case of real networks, one should 
expect that above the epidemic threshold} the extinction of epidemics is hardly ever attained \cite{VanMieghem2013, Draief2010}. For this reason, the literature is mainly concerned with establishing the value of the epidemic threshold, being a key measure of the robustness against epidemic spreading. \\
In the homogeneous setting, NIMFA determines the epidemic threshold for the effective spreading rate as $\tau^{(1)}_c =\frac{1}{\lambda_{1}(A)}$, 
where \stef{$\lambda_1(A)$ is the spectral radius of the adjacency matrix $A$}, (see \cite{VanMieghem2009,VanMieghem2014}).
When $\tau \leq \tau_c^{(1)}$ the only equilibrium of the NIMFA system is the zero point. When $\tau > \tau^{(1)}_c$, 
there exists
a second non-zero steady-state that reflects well the observed viral behavior \cite{VanMieghem2012a}, and that can be regarded as the analogous of the quasi-stationary state
of the exact stochastic SIS model.  
 NIMFA yields an upper bound for the probability of infection of each node, as well as a lower bound for the epidemic threshold \cite{VanMieghem2009, CatorPositivecorrelations}.
\steff{This fact ensures that $\tau_c^{(1)}$ allows us
to determine a safety region $\left\{\tau \leq \tau^{(1)}_c\right\}$ for the effective spreading rate, that guarantees
the extinction of epidemics in a reasonable time frame. Thus, even though NIMFA is approximated, a design for our optimization problem, based on NIMFA, is always ``safe" or ``secure". }
\subsection{Heterogeneous SIS mean-field model}\label{hetsys}

In this section, we consider a heterogeneous setting. We include the possibility that the infection rate is link specific, denoting by 
$\beta_{ij}$ the infection rate of node $j$ towards node $i$. 
Moreover, each node $i$ recovers at rate $\delta_i$, so that the curing rate is node specific.
 Basically, we allow for the epidemics to spread over a
 {\em directed weighted} graph. 

\stef{A direct weighted graph (or weighted digraph) is a triple $G=(V,E, \rho)$, where the elements of $E$, named \textit{arcs} (or directed edges), are ordered couples $e=(i,j)$ of distinct vertices of $V$, and $\rho:E \rightarrow (0,\infty)$ is a given function; $\rho(e)$ is called the \textit{weight} of $e$.}
\stef{ The matrix $A=(a_{ij})$, with elements $a_{ij}=\rho(i,j)=\beta_{ji}$, is the \textit{weighted adjacency matrix} of $G$.
} In our framework, $e=(i,j)\in E$ and $\rho(e)=\beta_{ji}$ means that node $i$ can infect node $j$ with rate $\beta_{ji}$.
Again, self-loops and multiple edges (multiple arcs with the same direction) are not permitted.
Hereafter, we shall assume that the directed graph is strongly connected, i.e., for all pairs of nodes $i,j \in V$, there is a path form $i$ to $j$ and from $j$ to $i$.

\fdp{As in} the homogeneous case, the SIS model with heterogeneous 
infection and recovery rates is a Markovian process. The time for infected node $j$
to infect any susceptible neighbor $i$ is an exponential random variable with
mean $\beta_{ij}^{-1}$. Also, the time for node $j$ to recover is an exponential random 
variable with mean $\delta_j^{-1}$.   
 A NIMFA model for the heterogeneous setting has been presented first in \cite{Inhom}, where 
a node $i$ can infect all neighbors with the same infection rate $\beta_i$. Here we include the possibility that the infection rates depend on the connection 
between two nodes, thus covering a much more general case.
The NIMFA governing equation for node $i$ in the heterogeneous setting writes as
\begin{equation}
\begin{aligned}\label{het1}
\hskip-1mm \frac{d p_i(t)}{dt}=  \sum_{j=1}^N  \beta_{ij} p_j(t)  -  \sum_{j=1}^N  \beta_{ij} p_i(t) p_j(t)  -\delta_i p_i(t), \; \hskip15mm
i= 1,\ldots,N .
\end{aligned}
\end{equation}

Let the vector $P(t)=(p_1(t), \ldots, p_N(t))^T$ and let $\overline{A}=(\overline{a}_{ij})$ be the matrix defined by $\overline{a}_{ij}=\beta_{ij}$ when $i \neq j$, 
and $\overline{a}_{ii}= - \delta_i$; moreover let $F(P)$ be a column vector whose $i$-th component is   $- \sum_{j=1}^N  \beta_{ij} p_i(t) p_j(t)$. 
Then we can rewrite \eqref{het1} in the following form:
\begin{equation}\label{het}
\frac{d P(t)}{dt}= \overline{A} P(t) + F(P).
\end{equation}
Let 
\[
r(\overline A)= \max_{1 \leq j \leq N} Re(\lambda_j(\overline{A})),
\]
be the \textit{stability modulus}~\cite{Stab} of $\overline A$, where $Re (\lambda_j(\overline{A}))$ denotes the real part of the eigenvalues 
of $\overline A$, $j=1, \ldots, N$. %
\steff{Now, we adopt a result from \cite{Stab} that lead us to find the epidemic threshold, and to extend the global stability analysis of the homogeneous NIMFA system (see e.g. \cite{QEP}) 
to the entirely heterogeneous setting, where each node can potentially infect each of its neighbors with different infection rates. 
We underline that to use the result in \cite{Stab}, the matrix $\overline{A}$ needs to be irreducible, this is equivalent to say that its associated digraph must be strongly connected.}

\begin{theorem}\label{threshhet} 
If  $r(\overline{A}) \leq 0$ then $P=0$ is a globally asymptotically stable equilibrium point in $I_N =[0,1]^N$ for the system \eqref{het}, On the other hand if $r(\overline{A}) > 0$ then there exists a constant solution $P^{\infty} \in I_N -\left\{0\right\}$, such that $P^{\infty}$ is globally asymptotically stable in  $I_N -\left\{0\right\}$ for \eqref{het}.
\end{theorem}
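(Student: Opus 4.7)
The plan is to exploit the cooperative (monotone) structure of the system and apply a Lajmanovich--Yorke type argument, which is in the spirit of the result cited from \cite{Stab}. First I would record three preliminary facts. (i) $I_N=[0,1]^N$ is forward invariant: on the face $\{p_i=0\}$ one has $\dot p_i=\sum_{j\neq i}\beta_{ij}p_j\ge 0$, while on $\{p_i=1\}$ one has $\dot p_i=-\delta_i<0$. (ii) For $i\neq j$, $\partial\dot p_i/\partial p_j=\beta_{ij}(1-p_i)\ge 0$ on $I_N$, so $\overline A$ is Metzler and the flow is monotone. (iii) $F(P)\le 0$ componentwise on $I_N$, and the scalar identity $F_i(\alpha P)=\alpha^2 F_i(P)$ gives the sub-homogeneity $F(\alpha P)\ge\alpha F(P)$ for $\alpha\in[0,1]$. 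Throughout I would assume $\overline A$ irreducible (the natural strong-connectivity hypothesis), so that the Perron--Frobenius theorem for Metzler matrices applies: $r(\overline A)$ is a simple eigenvalue with strictly positive left eigenvector $v$ and right eigenvector $w$.

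For the subcritical case $r(\overline A)\le 0$, I would first use a comparison argument: since $F(P)\le 0$, one has $\dot P\le \overline A P$ componentwise, and cooperativity of $\overline A$ yields $0\le P(t)\le e^{t\overline A}P(0)$. When $r(\overline A)<0$ this forces exponential decay to zero. For the boundary case $r(\overline A)=0$, I would use $V(P)=v^T P$ as a Lyapunov function. Indeed, $\dot V = r(\overline A)\,v^T P+v^T F(P) = -\sum_{i,j}v_i\beta_{ij}p_ip_j\le 0$. On the set $\{\dot V=0\}$, positivity of $v$ forces $\beta_{ij}p_ip_j=0$ for every pair $(i,j)$; invoking the equilibrium condition on the support $\{i:p_i>0\}$ and using irreducibility then forces this support to be empty. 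LaSalle's invariance principle thus gives global asymptotic stability of $0$ in $I_N$.

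For the supercritical case $r(\overline A)>0$, I would first produce a non-trivial equilibrium. Evaluating the vector field at $\varepsilon w$ gives $\overline A(\varepsilon w)+F(\varepsilon w)=\varepsilon\,r(\overline A)\,w+O(\varepsilon^2)>0$ componentwise for $\varepsilon$ small, so by monotonicity the trajectory from $\varepsilon w$ is componentwise non-decreasing, trapped in the compact set $I_N$, and therefore converges to some equilibrium $P^\infty\ge\varepsilon w>0$. Symmetrically, starting from the all-ones vector produces a non-increasing trajectory converging to an equilibrium. Sub-homogeneity of $F$ together with irreducibility of $\overline A$ yields uniqueness of the positive equilibrium (a standard Krein--Rutman / concave-cooperative argument), so the two monotone limits coincide with $P^\infty$. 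For an arbitrary initial datum in $I_N\setminus\{0\}$, irreducibility propagates positivity to all coordinates after an arbitrarily small time, and the orbit can then be sandwiched between the above lower and upper trajectories, which both converge to $P^\infty$; this gives the claimed global asymptotic stability.

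The main obstacle I expect is the boundary subcritical case $r(\overline A)=0$: linearisation is inconclusive, and one has to combine the Lyapunov bound with LaSalle's principle, carefully using the bilinear form of $F$ and irreducibility of $\overline A$ to exclude non-trivial $\omega$-limit points on the face where the Perron weighting degenerates. A second delicate step is uniqueness of the positive equilibrium in the supercritical regime, which does not follow from linear considerations and genuinely requires the concavity-type property of the map $P\mapsto\overline A P+F(P)$ with respect to the componentwise order.
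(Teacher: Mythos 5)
The paper does not prove Theorem~\ref{threshhet} at all: it is imported verbatim from the cited reference \cite{Stab} (the Lajmanovich--Yorke analysis of the heterogeneous gonorrhea model), and your proposal is essentially a correct reconstruction of that classical proof --- forward invariance and cooperativity, comparison plus a $v^{T}P$ Lyapunov/LaSalle argument in the subcritical case, and a monotone sub-/super-solution sandwich with a sub-homogeneity uniqueness argument in the supercritical case. The only point worth flagging is that irreducibility of $\overline{A}$, which your argument (and the original one) genuinely needs, is not stated in the theorem but is assumed implicitly by the paper (cf.\ Proposition~\ref{proprieties1} in the appendix), so your decision to add it as a hypothesis is the right one.
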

\begin{proof}
See \cite[Thm. 3.1]{Stab}.
\end{proof}

\begin{remark}\label{real1}
Let $A$ be an $N \times N$ irreducible and non negative matrix, $D$ a diagonal matrix with positive entries. Let $\sigma(A-D)$ be the spectrum of the matrix $A-D$,
then the eigenvalue $\lambda \in \sigma(A-D)$, such that $Re(\lambda)=r(A-D)$, is real (this follows also from \eqref{real}). 
\end{remark}

\fdp{The result in Theorem ~\ref{threshhet} is crucial for the cost-optimal curing problem described in the next section. In fact,
  it identifies the value of the epidemic threshold, separating an absorbing phase, where the epidemics will go extinct, from an endemic phase.  Thus, this critical threshold is recognized as a key value for treatment strategies against viral infection. }


\section{\steff{Optimal curing policies for arbitrary weighted networks}}\label{opt}


\stef{Now, leveraging on the result in Theorem \ref{threshhet}, we address the problem of suppressing an epidemic spreading, by a cost-optimal distribution of
  resources within a networked population}.
\stef{Allocating more resources at each node aims to increase
its curing rate, that is reflected, e.g., by
speeding up its detection capabilities and treatments. We consider that recovery resources
have an associated cost that might be different for each node.}
Thus, let us define a cost function which \fdp{measures the expenditure in order to distribute curing resources to all nodes.}
  Let $f_i(\delta_i)$ be a real, linear and monotonically increasing function with respect to $\delta_i$, whose value represents the effort of modifying
the recovery rate of node $i$. 
%



This model fits the case of disease treatment plans: policy makers can distribute different amount of resources (e.g. money for medicines, medical and nursery staff, etc,...) 
in a network of hospitals, or they can design a different health program for different districts, cities, or nations in the case of a timely mass prophylaxis plan.
For instance, in the US, pharmaceutical supply caches and production arrangements have been pre-designated. \fdp{This is done} in order to
 be used for large-scale ongoing prophylaxis and/or vaccination campaigns in case of sudden intentional or natural outbreaks disease \cite{hupert2004community}.

For now, we take into account an arbitrary weighted network. In Sec. \ref{sec:comm}, instead, we shall provide a cost-optimal
  curing policy for a network with community structure.
Hereafter, we consider $f(\delta_i)=c_i \delta_i$, \rev{with $c_i>0$, that we shall call the cost coefficients}, 
 for $i=1,\ldots, N$. Thus, the cost function is the cumulative sum over the nodes' set
\begin{equation*}\label{eq:cost}
U(\Delta)=\sum_{i=1}^N c_i \delta_i,
\end{equation*}
 where $\Delta=(\delta_1, \ldots, \delta_N)$ is the curing rate vector.

\subsection{Undirected graphs with symmetric weights.}
Now, let us assume that $\beta_{ij}=\beta_{ji}$, for all $i,j=1, \ldots, N$, i.e., the weighted 
adjacency matrix \stef{$A=(\beta_{ij})$} is symmetric and, consequently, all its eigenvalues are real. \steff{Basically, now we are considering undirected graphs with symmetric weights.}

Let us define 
the $N \times N$ curing rate matrix, $ D=\diag(\Delta)$. \rev{We remark that, hereafter, we shall indicate with $\lambda_1(A)$ the maximum eigenvalue of $A$.} 
By Theorem \ref{threshhet}, we know that if $\lambda_1\!\big ( A - \diag{( \Delta)} \big ) \leq 0$, then the epidemics will go extinct. 
As we have explained in Section \ref{enm}, the critical threshold for the mean-field model is a lower bound of the threshold of the exact Markov model.
 Thus, the condition $\lambda_1\!\big ( A - \diag{( \Delta)} \big ) \leq 0$ corresponds, in the exact stochastic
 model, to a region where the infectious process dies out exponentially fast for sufficiently large times \cite{van2016approximate}. We recall that, instead, above the exact threshold the overall-healthy state is reached only after an unrealistically long time.
 Hence, in order to find a cost-optimal distribution of resources that guarantees the extinction,
 we seek for the solution of the following problem.
\begin{problem}[Eigenvalue Constraint Formulation]\label{prob:immun1}
Find $\ \Delta \geq 0$ which solves   
\begin{eqnarray*}
&&  {\rm{minimize}} \qquad\qquad U(\Delta)\\
&&\mbox{\rm{subject to:}} \quad\;\; \lambda_1\!\big ( A - \diag{( \Delta)} \big ) \leq 0,  \quad\;\; \Delta \geq 0 \nonumber. 
\end{eqnarray*}
\end{problem}

Problem \ref{prob:immun1} can be reformulated as a semidefinite program, that is a convex optimization problem \cite{boyd96}.
In fact $ \diag(\Delta)=\sum_{i=1}^{N} \Delta_i \diag(\mathbf{e}_i)$, where $\Delta_i$ is the $i$-th component of $ \Delta$ and
$\mathbf{e}_i$ is the $i$-th element of the standard basis so that $\diag(\mathbf{e}_i)\geq 0$. Hereafter, as in \cite{boyd},
the inequality sign in $M \geq 0$, when $M$ is a matrix, means that $M$ is positive semidefinite. Thus, we can express the
optimization problem with eigenvalue constraint as a semidefinite programming (SDP) problem.

\begin{problem}[Semidefinite Programming Formulation]\label{prob:immun2}
Find $\Delta$ which solves   
\begin{eqnarray*}
&&  {\rm{minimize}} \qquad\qquad U(\Delta)\nonumber \\
&&\mbox{\rm{subject to:}} \quad\;\;    \diag{( \Delta) - A} \geq 0\nonumber \\
&&\phantom{\mbox{\rm{subject to:}}} \quad\;\; \Delta \geq 0 \nonumber 
\end{eqnarray*}
\end{problem} 
The feasibility of the problem is always guaranteed, as showed in the following
\begin{theorem}[Feasibility]\label{feas} 
Problem \ref{prob:immun2} is feasible.
\end{theorem}
\begin{proof}
We define $l_{\max}:=\max_i\sum_j a_{ij}$ and choose $\Delta= l_{\max} \mathbf{1}_{N}$, where $\mathbf{1}_{N}$ is the all-one vector of length $N$, consequently, $D=l_{\max}  I_{N}$, with $I_{N}$ identity matrix of order $N$. Then, for any  vector $w=\sum_{i=1}^N z_i v_i$, where $z_i \in \mathbb R$, for $i=1 \ldots N$ and $\left\{v_1, \ldots, v_N\right\}$ is an eigenvector basis of $A$, it holds
\begin{eqnarray*}
&& w^T(A- D)w =  w^T\left( \sum_{i=1}^N   \lambda_i(A)z_iv_i  - l_{\max} w\right) \leq  ( \lambda_1(A) - l_{\max})||w||^2 \leq 0,
\end{eqnarray*}
where the last inequality follows since $\lambda_1(A) \leq \max_i\sum_j a_{ij}$. 
 Hence the chosen vector satisfies the constraint and we can assert that the feasible region is not empty.
\end{proof}

Since the problem is feasible there is always an optimal point on the boundary \cite{boyd96} and, 
by the fundamental result of convex optimization, any locally optimal point of a convex problem is 
globally optimal \cite[Sec. 4.4.2]{boyd}.


\stef{\textit{Existing results.} As introduced in Sec. \ref{intro}, an SDP approach is adopted also in \cite{Pappas} to detect a cost-optimal distribution of protective resources in an arbitrary undirected network. Unlike our approach, they consider that each node $i$ can infect all its neighbors with the same infection
rate $\beta_i$; moreover they describe the minimization of a decreasing vaccination cost function, which depends on the infection rates,
that are allowed to be in a feasible interval. In the second part of the work they propose a greedy approach for the case of all-or-nothing vaccination, i.e., they restrict the infection rate to be in a discrete set, possibly different for each node, $\beta_i \in \left\{\underline{\beta_i}, \overline{\beta_i}\right\}$, where the two values, are fixed a priori.}
 
\stef{With respect to their approach, in our model, each node can potentially infect each of its
neighbors with different infection rates, thus we treat a wider scenario. In addition, from the next section onwards, 
we shall focus, mainly, on a population divided into communities, obtaining a dimensionality reduction of the
    optimization problem \eqref{prob:immun2}. 
		Moreover, in Sec.~\ref{bis} we propose a bisection algorithm for a two-level optimal curing problem, i.e we consider a two-dimension curing policy, providing that the population is divided in two categories, each of which will benefit from one of the two policies. The two available  values of the curing rate are not fixed a priori.}

\stef{At last, in \cite{preciado2014optimal}, Preciado et al., leverage on
  Geometric Programming (GP) techniques for the resource allocation problem, applied to 
arbitrary weighted directed graphs, hence they do not require the symmetry of the adjacency matrix.
	 However, the drawback of such formulation is that it does not fit for a linear cost function of the type we are considering, which is, anyway, a standard cost function of practical relevance.} Thus, in the next section, we show how our formulation of the problem, involving a linear cost function, can be extended to a certain
class of not symmetric matrices. 

\steff{\subsection{Extension to directed weighted networks.}} \label{extens}

The formulation of the optimization problem \eqref{prob:immun2} holds for symmetric weighted adjacency matrix, however we shall show how it can be extended to a certain
class of not symmetric matrices that are diagonally symmetrizable. In this case, for a not
symmetric matrix $A$, there exists a diagonal matrix $G$ such that $G^{-1}A G$ is symmetric,
for similarity their eigenvalues are the same and the semidefinite program formulation can be
applied to $G^{-1}A G$ \footnote{As suggested in ~\cite{wan2008designing}, see \cite{berman1994nonnegative} for a method to check if a matrix is
 symmetrizable, and, in case, the way to chose the diagonal matrix to achieve symmetry.}.
\steff{Thus, we can include also the case of not symmetric weighted adjacency matrix.
A notable example is that of an undirected network where each node $i$ can infect all its neighbors with the
  same infection rate $\beta_i$: the weighted adjacency matrix $BA$ with $A$ symmetric and $B=\diag(\beta_i)$ is
  not symmetric, however it is diagonally symmetrizable, indeed choosing $G=B^{1/2}$ we have
  $B^{-1/2}(BA) B^{1/2}= B^{1/2} A B^{1/2}$, which is symmetric (see, e.g., \cite{Pappas}). Hence, for our problem \ref{prob:immun2},
  we can request that the matrix $B^{1/2} A B^{1/2}$ is semidefinite positive.}

\steff{Otherwise, if we have an arbitrary, strongly connected, directed weighted graph and a not symmetrizable $N$-dimensional adjacency matrix $A$, we can apply our formulation to its Hermitian part, $\cH=(A+A^T)/2$, obtaining a suboptimal solution. More precisely, let $Re\lambda(A)=\left(Re \lambda_1(A), \cdots, Re \lambda_N(A) \right)$ be the vector of the real part of the eigenvalues of $A$ and $\lambda(\cH (A))=(\lambda_1(\cH (A)), \cdots, \lambda_N(\cH (A)))$, both arranged in decreasing order; then it holds 
$Re\lambda(A) \prec \lambda(\cH (A))$ \cite[Thm. 10.28]{zhang2011matrix}, meaning that $\lambda(\cH (A))$ \textit{majorizes} $Re\lambda(A)$ \cite[Sec. 10.1]{zhang2011matrix}}.
\steff{Basically, this result suggests that the stability modulus $\lambda_1(A-D)$ (see Remark \ref{real1}) is smaller than $\lambda_1(\cH(A)-D)$, hence the feasible region of our optimization problem, i.e., where $\lambda_1(\cH(A)-D) \leq 0 $, is a subset of the feasible region for the matrix $A$. Hence, if we solve the problem \eqref{prob:immun2}, choosing $\cH(A)$, 
  the value of the cost function obtained is an upper bound of the cost function that would be sufficient to bring $\lambda_1(A-D)$ at the critical value zero. Thus, we obtain a suboptimal solution, i.e., we will be able to lead the epidemic towards the extinction, but with more effort than it would be sufficient}.\\
\indent\stef{Hence, let us consider a diagonally symmetrizable weighted adjacency matrix $BA$; we want to compare the optimal cost function -- corresponding to the optimal solution of the problem \eqref{prob:immun2}) -- with the suboptimal cost function, obtained considering the hermitian part of $BA$. Besides, we compute also the cost in the case of a uniform curing rate vector for which the maximum eigenvalue of $B^{1/2}AB^{1/2}$ attains zero. We use a standard solver for semidefinite programs (see \cite{SDPT3}).} In Fig. \ref{fig:symm} a)  we consider the cost functions obtained averaging over 300 instances of \ER random graphs with $N=100$ for increasing values of $p$. 
  We take a matrix $B=\diag(\beta_i)$, where the infection rates are generated as uniform random variables \rev{ in the interval $(0.1,3)$, 
    and the $c_i$'s constants in the interval $(0.5,5)$}.
\stef{We observe that the suboptimal cost function is close to the optimal cost function, the closer the lower the
  values of $p$.} \rev{In Fig. \ref{fig:symm} b), instead, we fix the value of $p=0.3$ and we plot the costs as functions of the number of nodes $N$. We can see a growth in the difference between the suboptimal and the optimal cost functions as the number of nodes increase. Ultimately, we obtain always an advantage in the use of suboptimal cost function with respect to the uniform case.}
\vskip 0.15cm
	\steff{In the rest of the paper, we shall consider the case of a network with community structure. We shall show that -- in order to find the epidemic threshold for the system \eqref{het} -- it can be employed a matrix with lower dimension than the starting $N$-dimensional adjacency matrix. In turn, this provides a corresponding reduction in the dimension of our optimization problem (see Sec. \ref{optEq}).} 
\begin{figure*}[t]
  \centering
        \begin{minipage}{0.45\textwidth}
	\includegraphics[width=\textwidth]{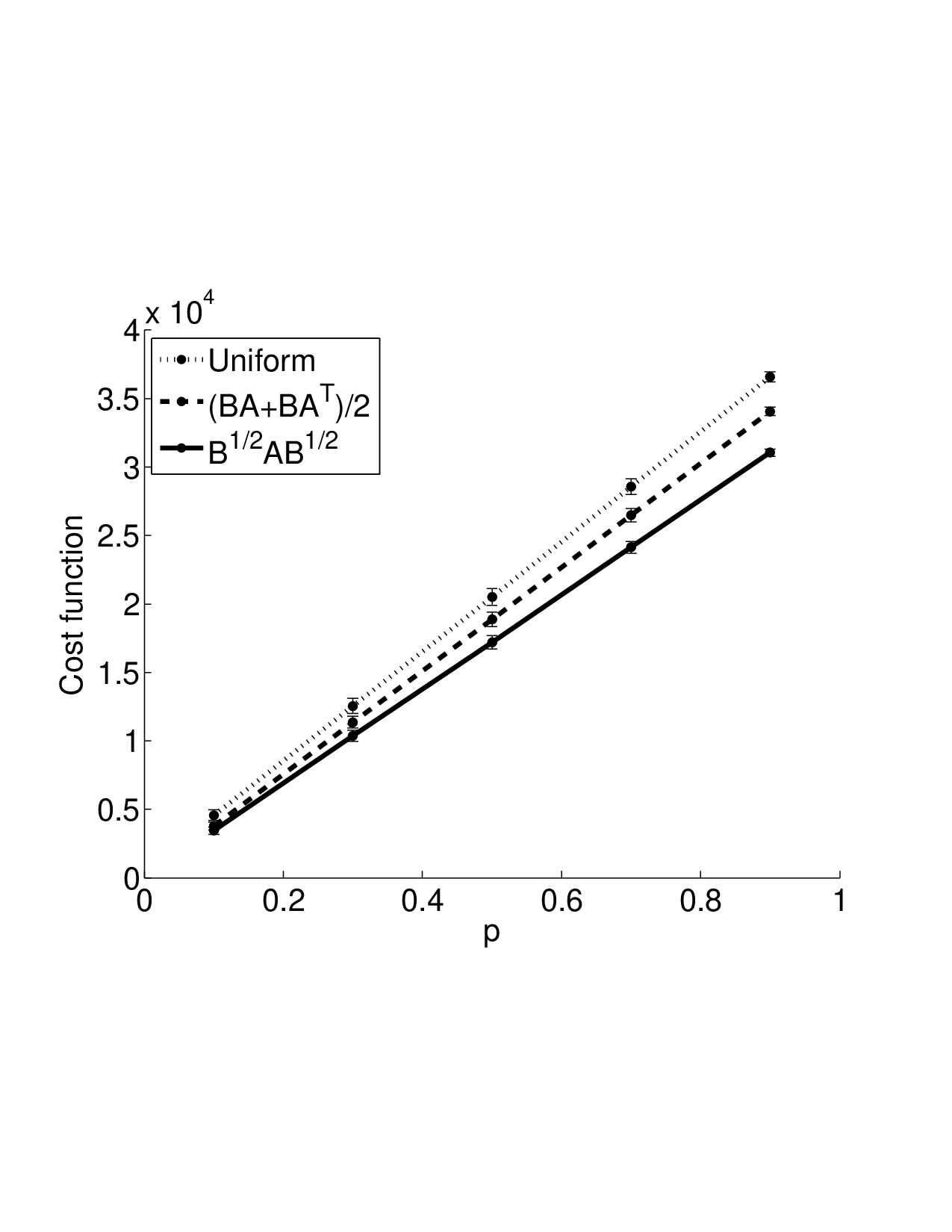}\put(-185,145){a)}
	\end{minipage}
	 \hskip4mm
    \begin{minipage}{0.45\textwidth}
			\includegraphics[width=\textwidth]{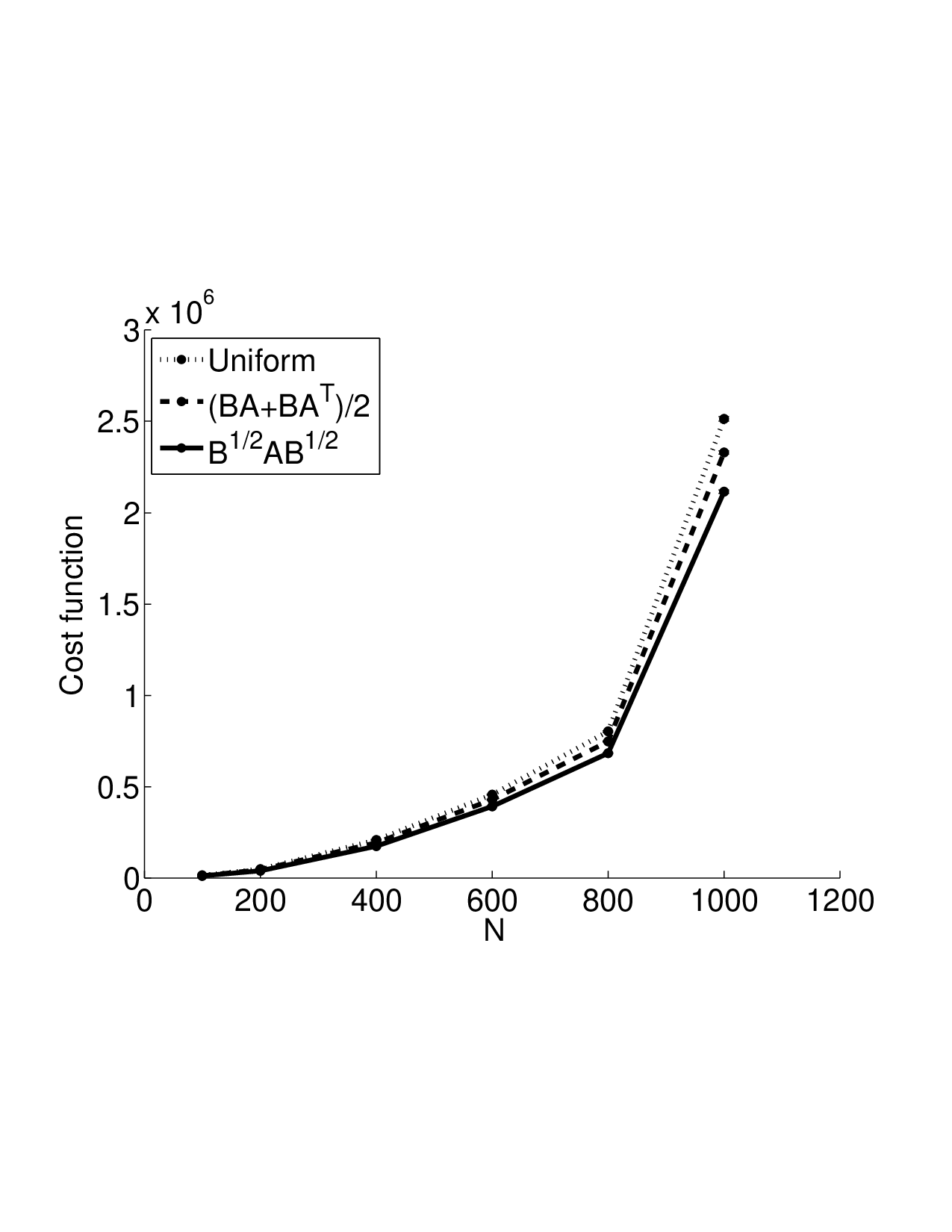}\put(-185,145){b)}
		\end{minipage}
	\caption{\fdpr{Extension to directed weighted networks: comparison between optimal, suboptimal and uniform solutions. a) 
           \rev{the cost values have been obtained averaging over 300 instances of \ER sample graphs with $N=100$, for different $p=0.1,0.3, 0.5,0.7, 0.9$, b) average over $300$ \ER} sample graphs with $N=100,200,400,600,800,1000$ for
            $p=0.3$; $0.95$ confidence intervals are superimposed.} }\label{fig:symm}
\end{figure*}

\section{Community Networks}\label{sec:comm}


 \steff{Hereafter, we shall focus on the case when a contact network is shaped by an existing community network.
This framework captures some of the most salient structural inhomogeneities in contact patterns in many applied contexts \cite{pellis2015seven}}. 
There \fdp{exists an extensive} literature on the effect of network community structures on epidemics. A specific community structure may arise due to, 
for example, geographic separation. Models utilizing this structure are commonly known as ``metapopulation'' models, where the population is compound
of multiple interacting groups, which internally have homogeneous mixing \cite{wan2008designing} (see, e.g., ~\cite{Hanski, Masuda2010}). 
Such models assume that each community shares a common environment or is defined by a specific relationship. 
Some of the most common works on metapopulation regard a population divided into households with two level of mixing 
(\cite{ball1997epidemics, ross2010calculation, ball2002general}). This model typically assumes that contacts, and 
consequently infections, between nodes in the same group occur at a higher rate than those between nodes 
in different groups \cite{pellis2015seven}. Thus, groups can be defined, e.g., in terms of spatial proximity, considering 
that between-group contact rates (and consequently the infection rates) depend in some way on spatial distance, so that,
each individual can be theoretically infected by each of the other individuals.
However, models where infection can only be transmitted by nodes directly connected by an edge, may provide a more realistic
approach to the study of the evolution of the epidemics. In turn, an important challenge is how to consider a realistic
underlying structure and appropriately incorporate the influence of the network topology on the dynamics of epidemic~\cite{pellis2015seven,pellis2012reproduction, 
ball2008network, frank2009, wang2013effect}.
\begin{figure}[t]
	\centering
	\begin{minipage}{0.45\textwidth}
	\includegraphics[width=\textwidth]{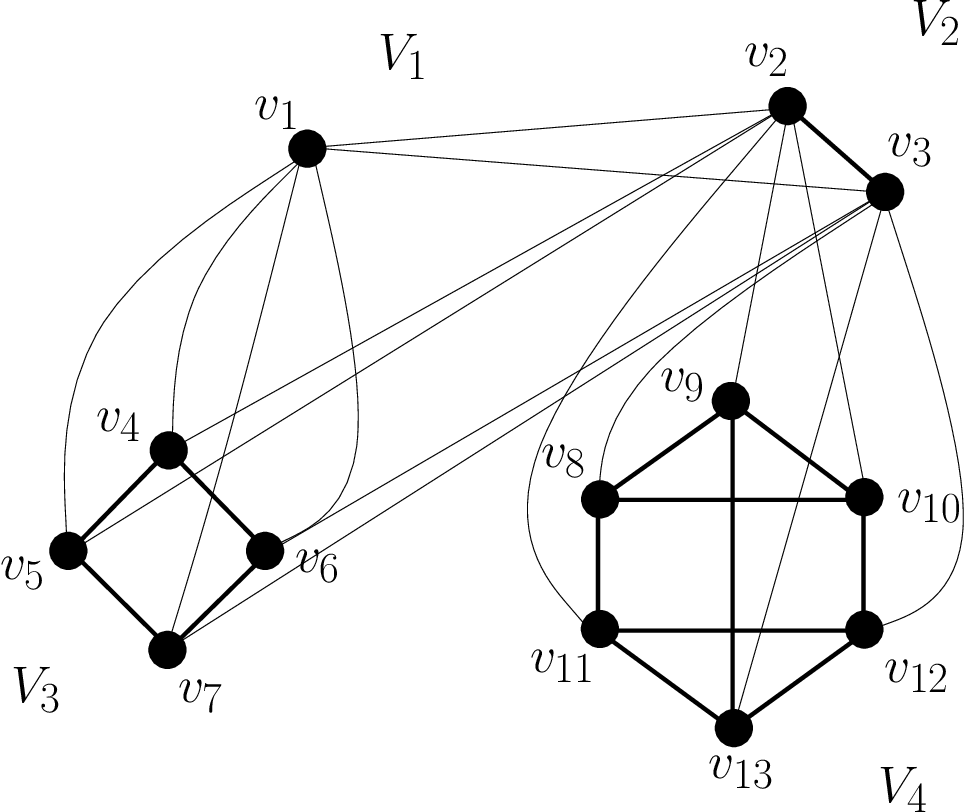}\put(-158,127){a)}
	\end{minipage}
	 \hskip20mm
    \begin{minipage}{0.35\textwidth}
			\includegraphics[width=\textwidth]{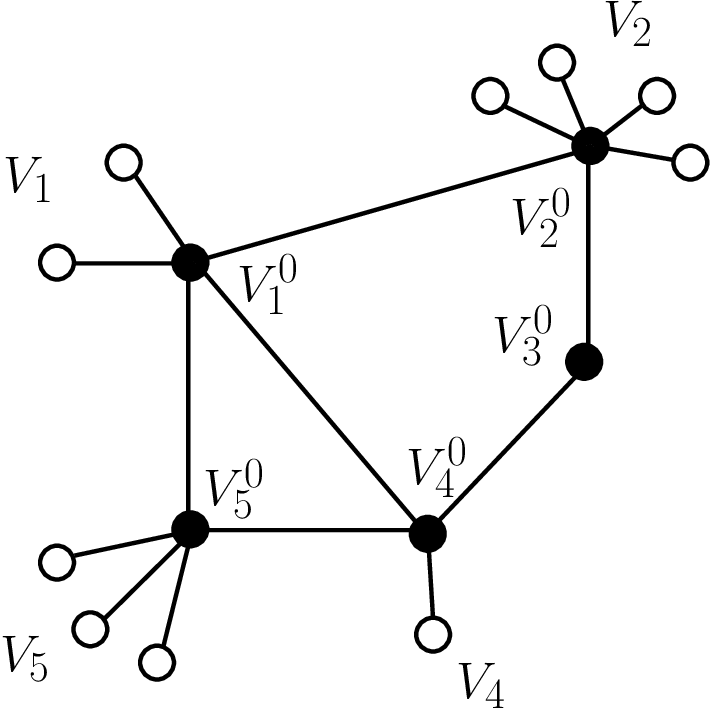}\put(-158,127){b)}
		\end{minipage}
\caption{A sample graphs with equitable partition. 
a) $V=\{V_1, V_2, V_3, V_4\}$, b) Interconnected star networks: $V=\{V_1^0, V_2^0, V_3^0, V_4^0, V_5^0, V_1, V_2, V_3, V_4, V_5\}$}\label{fig:fig11}	
\end{figure}

In \cite{QEP, Bonaccorsi} the authors analyze the dynamics of an epidemics on networks that are partitioned into local
communities, through the first-order mean-field approximation discussed in Section \ref{enm}. The investigation was 
based on the graph-theoretical notion of \emph{equi\-table partition}~\cite{Schwenk,godsil,EvolDelio}. Specifically,
for an undirected graph, let $\pi=\left\{V_1,...,V_n\right\}$ be a partition of the node set $V$, \stef{i.e., a sequence of mutually
disjoint nonempty subsets of $V$, called cells, whose union is $V$, that we assume given a priori};
 $\pi$ is called equitable if the subgraph $G_i$ of $G(V,E)$ induced by $V_i$ is regular for all $i$'s.
Furthermore, for any two subgraphs $G_i$ and $G_j$, whenever there exists at least one connection between nodes in the first 
and second subgraph, then each node in $G_i$ is connected with the same number of nodes in $G_j$. In \cite{QEP,Bonaccorsi} two-level infection rates have been considered: an intra-community infection rate and a lower
inter-community infection rate. In the network structures hereafter, \steff{we generalize the model to more than two levels of infectiousness}. \steff{We observe that usually a community is defined as \fdp{a set of network nodes joined together in tightly-knit groups whereas among such group
connections are looser~\cite{girvan2002community}}. 
Leveraging on the definition of equitable partition, instead, we can also consider that connections between nodes belonging
to the same community can be, eventually, less dense than connections with other communities. Thus, the definition of community acquires a broader sense.}

\steff{Networks with an equitable partition of the node set can describe} models consisting of multiple smaller sub-populations such as, e.g., households, workplaces, 
or classes in a school, when the internal structure of each community is represented by a complete graph (members of a small community
usually 
know each other) and all the nodes of adjacent communities are mutually linked (all member of adjacent communities may potentially 
come into contact). Equitable partitions can be observed also in the architecture of some computer networks where clusters of clients connect to 
single routers, whereas the router network has a connectivity structure with the nodal degree constrained by the number of ports (see as examples Fig. \ref{fig:fig11}b).
\stef{Equitable partitions appear also in the study of synchrony and pattern
formation in coupled cell networks \cite{stewart2003symmetry,golubitsky2005patterns} where they are named “balanced” partitions.
Equitable partitions have been used also to analyze the controllability of multi-agent systems, for the case of a multi-leader setting \cite{rahmani2009controllability}, and for the leader-selection controllability problem, in characterizing the set of nodes from which a given
networked control system (NCS) is controllable/uncontrollable \cite{aguilar2016almost}}.
These works show interesting realistic scenarios for the use of equitable partitions.

In particular, since the size of some real networks might pose limitations in our ability to investigate their spectral properties, we can leverage on the structural regularity of network with equitable partition, to reduce the dimensionality of our optimization problem \eqref{prob:immun2}.

\fdp{Next, we define equitable partitions} for the case of a directed weighted networks, extending the analysis in \cite{QEP} to this framework. %
\steff{With a little abuse of notation, hereafter we shall refer to a partition of a network, to indicate the partition of its node set.}

\subsection{Equitable partitions for weighted directed networks.}\label{sub:eq}
\fdp{The definition of equitable partitions can be extended to weighted directed graphs, based on~\cite[Def. 8.24]{EvolDelio}. That definition %
  \fdp{applies} to oriented weighted graphs \cite[Def. A.1]{EvolDelio}: we prefer to allow for a pair of symmetric oriented edges in order to cover
  naturally unoriented graphs.}
\begin{definition}\label{def:eqpart}
  Let \steff{ $G=(V,E,\rho)$} be a weighted directed graph. The partition $\pi=\left\{V_1,...,V_n\right\}$ of the node set $V$ is called \emph{inward equitable} or \emph{outward equitable} if for all $i,j \in \left\{1, \dots ,n \right\}$, there are  
\begin{align*}
c^{in}_{ij} \in \mathbb{R} \quad \text{s.t.} \quad \sum_{w \in V_j} \rho((v,w)) = c^{in}_{ij}, \quad 
 \hskip5mm  \text{for all} \quad v \in V_i, 
\end{align*}
 or
\begin{align*}
  c^{out}_{ij} \in \mathbb{R} \quad \text{s.t.} \quad \sum_{w \in V_j} \rho((w,v)) = c^{out}_{ij}, 
   \hskip8mm  \text{for all} \quad  v \in V_i, 
\end{align*}
respectively. The partition is called \emph{equitable} if it is both inward and outward equitable, hence for all $i,j \in \left\{1, \dots ,n \right\}$, there are
\begin{align*}
 c_{ij} \in \mathbb{R} \quad \text{s.t.} \quad \sum_{w \in V_j} \rho((v,w)) + \rho((w,v))= c_{ij}, 
   \hskip8mm \text{for all} \quad v \in V_i.
\end{align*} 
\end{definition}
We shall identify the set of all nodes in $V_i$ with the $i$-th {\em community} of the whole population.
\begin{remark}
Let $k_i$ be the number of elements of $V_i$, $i=1, \ldots, n$. If the partition of the node set of a weighted di-graph is equitable, then for all $i,j \in \left\{1, \dots ,n \right\}$,
\begin{equation}\label{out}
k_i c^{out}_{ij}=k_j c^{in}_{ji},
\end{equation}
\end{remark}
An equitable partition generates the \emph{quotient graph} $G/\pi$, which is a \emph{multigraph}, directed and weighted, with cells as vertices. For the sake of explanation,
in the following, we will identify $G/\pi$ with the simple graph having the same vertex
set, i.e. composed by the cells, where an edge exists between two cells, if at least one exists in the original
multigraph.

For the purpose of modeling, nodes of the quotient graph can represent communities, e.g., villages, cities or 
countries. Link weights in the quotient graph, in turn, provide the strength of the contacts between such communities. 
In particular, the weight of a link may be (a non-negative) function of the number of people traveling per day between 
two countries; in fact, the frequency of contacts between them correlates with the propensity of a disease to spread 
between nodes.\\\\

 Related to the quotient graph, there exists a quotient matrix, that contains the relevant structural information of the networks. Thus,
let us consider the $n \times N$ matrix $S=(s_{iv})$, where
\begin{equation*}
s_{iv}=\begin{cases}
\frac{1}{\sqrt{|V_i|}} & \text{ $v \in V_i$}\\
0 & \text{otherwise},
\end{cases}
\end{equation*}
from which it follows that $SS^T=I_n$.
Now let us consider the transpose of the adjacency matrix of the weighted directed graph $G$, that is 
\begin{equation}\label{matb4}
A^T=\overline{A}+D,
\end{equation}
where, \steff{we remember, $\overline{A}$ is the matrix in \eqref{het}} and $D=\diag(\Delta)$ is the curing rate matrix. Then, the transpose 
of the \emph{quotient matrix} of $G$ (with respect to the given partition) is 
\[
Q^T:=SA^TS^T.
\]

We can write the following explicit expression for $Q^T$:
\begin{equation}
Q^T=\diag(c^{out}_{ii})+ \left(\frac{k_i c^{out}_{ij}}{\sqrt{k_ik_j}}\right)_{i,j=1, \ldots, n}
\end{equation}
By \eqref{out}, we can write the matrix $Q^T$ as
\begin{equation}\label{eq:q4}
Q^T=\diag(c^{out}_{ii})+ \left(\sqrt{c^{out}_{ij} c^{in}_{ji}}\right)_{i,j=1, \ldots, n}
\end{equation}

\begin{remark}
We observe that matrix $Q$ in \eqref{eq:q4} might not be symmetric, whereas in the case of undirected 
graphs it is always symmetric (see, e.g., \cite{godsil, QEP}). Even though we have represented the most general 
definition of an equitable partition simpler situations can be represented. E.g., nodes 
of the same community may infect all nodes in another community with the same rate.
\end{remark}

When considering a population partitioned into communities, it may be appropriate to take 
into account the case where all nodes of a tagged community $j$ have the same recovery rate 
$\delta_j$, $j=1, \ldots ,n$. In turn, such rate may differ from one community to the other.
We remember that $N$ is the total number of nodes in the network, whereas $n$ is the number
of communities. 
\begin{definition}
Let us introduce the $1 \times n$ vector of nonzero curing rates $\overline{\Delta}=(\overline{\delta}_1, ... , \overline{\delta}_n)$, that we shall call the reduced curing rate vector and $\overline{D}=\diag(\overline{\Delta})$, the reduced curing rate matrix.
\end{definition}

\steff{Thus, we have the $1 \times N$ curing rates vector $\Delta=(\delta_1, \ldots, \delta_N)$, with components $\delta_z=\overline{\delta}_j$ 
for all $z \in V_j$ and $j=1, \ldots, n$.}

\steff{In appendix \ref{app:dim} we shall discuss when and how it is possible to reduce the original system \eqref{het} to a system of $n$ differential
  equations, through the matrix $Q^T$. Since for our optimization problem the parameter of interest is the epidemic threshold, in this section we limit our self to
  results related to this critical value.}
\begin{lemma}\label{lemma1}
Let $\pi=\left\{V_1, \ldots, V_n\right\}$ be an equitable partition. Let $A^T$ and $Q^T$ be weighted matrices as in \eqref{matb4} and \eqref{eq:q4}, 
respectively. Then, it holds: 

\noindent i) $(A^T-D)S^T=S^T(Q^T-\overline D)$. \\
\noindent ii) For all $\lambda \in \mathbb{C}$ and all $x \in  \mathbb{C}^n$ 
\begin{equation*}
(Q^T-\overline D)x= \lambda x \quad \text{if and only if} \quad  (A^T-D)S^Tx= \lambda S^Tx. 
\end{equation*} 
\end{lemma}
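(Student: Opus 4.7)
For part (i), the plan is to decompose the claim into two component identities—$A^T S^T = S^T Q^T$ and $D S^T = S^T \overline{D}$—and combine them linearly. The recovery-rate identity is essentially a direct entry-wise check: $(DS^T)_{vj}$ equals $\delta_v/\sqrt{k_j}$ when $v \in V_j$ and vanishes otherwise, while $(S^T \overline{D})_{vj}$ equals $\overline{\delta}_j/\sqrt{k_j}$ on the same support; the two coincide because by construction $\delta_v = \overline{\delta}_j$ for every $v \in V_j$. The substantive identity is $A^T S^T = S^T Q^T$. I would fix $v \in V_i$ and a community index $j$, then evaluate $(A^T S^T)_{vj} = \sum_{w \in V_j} A^T_{vw}/\sqrt{k_j}$; unpacking $A^T_{vw} = \rho((w,v))$ and invoking Definition~\ref{def:eqpart}, this sum collapses to $c^{out}_{ij}/\sqrt{k_j}$, a number that depends only on $(i,j)$ and not on the particular representative $v \in V_i$. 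The right-hand side is $(S^T Q^T)_{vj} = Q^T_{ij}/\sqrt{k_i}$, and matching the two against the explicit formula~\eqref{eq:q4} for $Q^T$ closes this identity. Subtracting gives $(A^T - D)S^T = S^T(Q^T - \overline{D})$.

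For part (ii), the forward direction is a one-line calculation: multiplying $(Q^T - \overline{D})x = \lambda x$ on the left by $S^T$ and invoking part (i) yields $(A^T - D)S^T x = S^T(Q^T - \overline{D})x = \lambda S^T x$. The reverse direction exploits the relation $SS^T = I_n$ crucially: starting from $(A^T - D)S^T x = \lambda S^T x$ and substituting (i) into the left-hand side produces $S^T\bigl[(Q^T - \overline{D})x - \lambda x\bigr] = 0$; left-multiplying by $S$ annihilates the leading $S^T$ and leaves $(Q^T - \overline{D})x = \lambda x$.

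The only mildly delicate step is the bookkeeping in checking $A^T S^T = S^T Q^T$: one must keep straight that $\rho((w,v))$ corresponds to the entry $A^T_{vw}$ (equivalently to the infection rate $\beta_{vw}$ of $w$ towards $v$) and handle the diagonal case $i=j$ cleanly. Here the self-loop contribution vanishes because $\beta_{vv} = 0$ forces $A^T_{vv} = 0$, so $\sum_{w \in V_i} A^T_{vw} = \sum_{w \in V_i,\, w\neq v} A^T_{vw} = c^{out}_{ii}$, which is consistent with the diagonal entry $Q^T_{ii} = c^{out}_{ii}$ read off from~\eqref{eq:q4}. After these conventions are pinned down, both parts amount to a few lines.
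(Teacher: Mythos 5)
Your proof is correct and follows essentially the same route as the paper: part (i) is the same entry-wise verification of $A^TS^T=S^TQ^T$ and $DS^T=S^T\overline{D}$, and part (ii) is the standard intertwining argument (the paper simply delegates it to Godsil's Theorem 2.2, whose content is exactly your two-line computation using $SS^T=I_n$). Your extra care with the convention $A^T_{vw}=\rho((w,v))$ and the vanishing diagonal $\beta_{vv}=0$ is consistent with the paper's definitions.
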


Now let us consider the system of $N$ differential equations \eqref{het}. It is possible to extend \cite[Thm.~4.1]{QEP} to the case of directed graphs. Following
  that result, if we assume that at time $t=0$ the infection probability is equal for all nodes in the same community (while it may differ from one community to the
  other), the number of equations in \eqref{het} can be reduced by using the transpose of the quotient matrix $Q^T$. Hence, the reduced dynamical system writes
\begin{align}\label{eqred}
\frac{d\overline{p}_j(t)}{dt}  &=  (1-\overline{p}_j(t))\sum_{\substack{m=1 \\ m\neq j}}^n  \!\! c^{out}_{jm} \overline{p}_m(t) 
			             + c^{out}_{jj}(1-\overline{p}_j(t))\overline{p}_j(t) - \overline{\delta_j} \overline{p}_j(t),  \hskip15mm j=1,\ldots,n 
\end{align}

where $\overline{p}_j(t)$ is the infection probability of a node in the community $j$.
We can prove that, in the case of a graph whose node set has an equitable partition, and {\em regardless of initial conditions},
the critical threshold for \eqref{het}, applying Thm. \ref{threshhet}, can be determined directly considering the reduced system \eqref{hetQ12}.

\begin{proposition}\label{propthr}
  The elements of the curing rates vector $\Delta=( \delta_1, ...,\delta_{N})$, that determine the critical threshold of \eqref{het}, are identified by the elements
  of $\overline \Delta=(\overline \delta_1, ... , \overline \delta_n)$, in such a way that $\delta_z=\overline \delta_j$ for all $z \in V_j$, $j=1, \ldots, n$, for
  which  
\begin{equation}\label{qthr}
r( Q^T - \overline D) = 0,
\end{equation}
 where $r$ is the stability modulus.
\end{proposition}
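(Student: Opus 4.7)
The plan is to use Theorem \ref{threshhet}, which identifies the critical threshold of the full system \eqref{het} with the locus $r(\overline{A})=0$, and to reduce this to the analogous condition on the quotient. Since $\overline{A}=A^T-D$, the proposition follows immediately once I prove the spectral identity
\[
r(A^T-D)=r(Q^T-\overline{D})
\]
for any $D$ obtained by lifting $\overline{D}$ in the prescribed manner (i.e.\ $\delta_z=\overline{\delta}_j$ for $z\in V_j$), because then $r(\overline{A})=0$ if and only if $r(Q^T-\overline{D})=0$.

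The easy direction $r(Q^T-\overline{D})\le r(A^T-D)$ I would obtain directly from Lemma \ref{lemma1}(ii): any eigenpair $(\lambda,x)$ of $Q^T-\overline{D}$ gives an eigenpair $(\lambda,S^Tx)$ of $A^T-D$, with $S^Tx\neq 0$ because $SS^T=I$ makes $S^T$ injective. Hence $\sigma(Q^T-\overline{D})\subseteq\sigma(A^T-D)$, which forces the inequality on stability moduli.

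For the reverse inequality I would argue via Perron--Frobenius after a uniform shift. Choose $c\ge \max_i \delta_i$, so that $M:=A^T-D+cI$ and $\overline{M}:=Q^T-\overline{D}+cI$ are both nonnegative with spectral radii $\rho(M)=r(A^T-D)+c$ and $\rho(\overline{M})=r(Q^T-\overline{D})+c$. The intertwining relation of Lemma \ref{lemma1}(i) reads $MS^T=S^T\overline{M}$. Apply Perron--Frobenius to $\overline{M}$ to produce a nonnegative eigenvector $\overline{v}$ at the eigenvalue $\rho(\overline{M})$; then $S^T\overline{v}$ is a nonnegative eigenvector of $M$ at the same eigenvalue. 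Assuming the contact digraph is strongly connected, $M$ is irreducible, its nonnegative eigenvector is unique up to a positive scalar, and it corresponds to $\rho(M)$. Therefore $\rho(\overline{M})=\rho(M)$ and hence $r(A^T-D)=r(Q^T-\overline{D})$, completing the identity.

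The main obstacle is exactly this reverse inequality: one must show that the Perron eigenvector of the full matrix $M$ lies in $\operatorname{range}(S^T)$, i.e.\ is constant on the partition classes. The irreducibility-based argument above is the cleanest route and it matches the usual framework for SIS threshold analysis. If strong connectivity cannot be taken for granted, a fallback is the Collatz--Wielandt characterization $\rho(M)=\inf_{x>0}\max_i (Mx)_i/x_i$: restricting the infimum to the smaller set of lifted positive vectors $x=S^Ty$ yields $\rho(M)\le \rho(\overline{M})$, because the intertwining collapses each ratio $(Mx)_i/x_i$ to $(\overline{M}y)_{h(i)}/y_{h(i)}$, where $h(i)$ denotes the community index of node $i$, so the outer maximum reduces to the quotient-level expression whose infimum over $y>0$ is precisely $\rho(\overline{M})$.
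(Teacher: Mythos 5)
Your proposal is correct and follows essentially the same route as the paper: invoke Theorem~\ref{threshhet}, shift by $cI$ to obtain nonnegative matrices, lift the Perron eigenvector of the quotient through $S^T$ via Lemma~\ref{lemma1}, and use irreducibility to identify it with the Perron vector of the full matrix, giving $r(A^T-D)=r(Q^T-\overline{D})$. Your Collatz--Wielandt fallback is a nice extra that dispenses with strong connectivity, and the paper additionally records the (routine, similarity-based) identity $r(\tilde{Q}-\overline{D})=r(Q^T-\overline{D})$ to tie the result to the reduced system \eqref{hetQ12}, but neither point changes the substance of the argument.
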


Since the quotient matrix and the adjacency matrix have the same stability modulus (and so their transposed do), a computational
advantage can be obtained in the calculation of the critical threshold of the system \eqref{het}.
\steff{This result is very relevant for our optimization problem. Indeed, in the case of a network with equitable partition, we can
  use a lower dimensional matrix to compute the epidemic threshold.} 


\section{Optimization for Networks with Equitable Partitions}\label{optEq}


In this section, we consider a heterogeneous curing control per community. 
\steff{First, we assume that all nodes in community $j$ infect all nodes in community $i$ with the same infection rate,
$\beta_{V_i V_j}$, and that $\beta_{V_i V_j}={\beta}_{V_j V_i}$, $i,j=1, \ldots, n$. In this case, the graph is undirected
and the weights are symmetric, thus the quotient matrix $Q$ is symmetric and has real eigenvalues.}
Now let us consider the $1 \times n$ reduced curing rate vector $\overline \Delta$, 
the cost function writes
\stef{
\[
U(\overline \Delta)=\sum_{j=1}^n c_j k_j \overline{\delta}_j.
\]}

Thus, $U(\overline \Delta)$ is the cost for curing all elements of each community $j$ at rate \stef{$\overline{\delta}_j$}, \steff{where $c_j >0$, $j=1, \ldots,n$}.
We seek for the solution of the following  
\begin{problem}[Eigenvalue Constraint Formulation]\label{prob:immunCOMM1}
Find $\overline \Delta \geq 0$ which solves   
\begin{eqnarray*}
&&  {\rm{minimize}} \qquad\qquad U(\overline \Delta)\\
&&\mbox{\rm{subject to:}} \quad\;\; \lambda_1\!\big (  Q - \diag{(\overline \Delta)} \big ) \leq 0,  \quad\;\; \overline \Delta \geq 0 \nonumber 
\end{eqnarray*}
\end{problem}
which also writes
\begin{problem}[Semidefinite Programming Formulation]\label{prob:immunCOMM2}
Find $\overline \Delta \geq 0$ which solves   
\begin{eqnarray*}
&&  {\rm{minimize}} \qquad\qquad  U(\overline \Delta)\nonumber \\
&&\mbox{\rm{subject to:}} \quad\;\;    \diag{(\overline \Delta) - Q} \geq 0\nonumber \\
&&\phantom{\mbox{\rm{subject to:}}} \quad\;\; \overline \Delta \geq 0 \nonumber 
\end{eqnarray*}
\end{problem} 
Thm.~\ref{feas} guarantees the feasibility of the problem.
 The general case of equitable
  partitions introduced in Sec. \ref{sub:eq},
  may not lead to a symmetric quotient matrix $Q$. However, we may consider suboptimal
  solutions -- as explained in Sec. \ref{extens} -- obtained by applying the formulation of our
  optimization problem to the hermitian part of $Q$.


\steff{In the next section, we consider a simpler version of Problem ~\ref{prob:immunCOMM2}
and we design a more efficient algorithm with respect to the SDP program.}
 

\subsection{Two-level curing problem}\label{sec:two}

\fdpe{The state of the art for SDP solvers such as, e.g., the \texttt{SDPT3} solver used for our numerical computation, provide solutions for moderate size graphs. 
Actually, the  best known bound
  for the 
	complexity of an $\epsilon$-solution attained with an
  interior point method is $O( n^{3.5}\log(1/\epsilon))$, where $\epsilon$ represents the accuracy~\cite{Nemirovski}. }
\stef{The problem can be solved more efficiently when we face a two-level optimal curing problem, for which we shall provide an algorithm that yields an
$\eps$-approximation of the optimal solution, with a complexity equal to $O(\log(n)n^{3.3731} \log(1/\epsilon))$ (see Thm. \ref{thmCompl}).}
\stef{Precisely, we consider only two possible levels of
the nodes local curing rates, let us say $\delta_0$ and $\delta_1$, that are not fixed a priori.
This situation fits well, e.g., in the case of
a network where communities 
  are of ``two types''.}
	Communities of the first type are \stef{eligible for} curing rate $\delta_0$,
  whereas communities of the second type are \stef{eligible for} curing
rate $\delta_1$. For convenience, we define \stef{the former}, {\em central} communities, and \stef{the latter}, {\em terminal} communities.
 
\stef{Such kind of} \fdpe{configuration is suitable for a network that is, e.g., bipartite
(where each node, e.g., represents a full-meshed community), or for an interconnected stars network,
  i.e., a network obtained by interconnecting star graphs by linking stars' central nodes
  (see Fig~\ref{fig:fig11}b).} Let us note that the Barab{\'a}si-Albert graph model \cite{barabasi1999},
that captures the power-law degree distribution often seen (or approximately seen) in real-world networks,
can be regarded as a set of hubs with star graph features \cite{Li2012}.
Bipartite networks, instead, can be used to understand the spreading of sexually
transmitted diseases, in which the population is naturally divided into males and females
and the disease can only be transmitted between nodes of different kinds. Bipartite networks
can also represent the spreading of diseases in hospitals, in which one type of node accounts
for (isolated) patients and the other type for caregivers, or some vector-borne diseases, such
as malaria, in which the transmission can only take place between the vectors and the hosts~\cite{PietSurvey}.

\stef{Thus, let us consider} the following partition of the node set, $\pi_0=\left\{V_1^{0},...,V_m^{0}\right\}$ 
and $\pi_1=\left\{V_1,...,V_{m'}\right\}$. We assume that the node set partition $\pi=\pi_0 \cup \pi_1$
is equitable. 
Let us introduce the curing matrix 
$D=\diag\left(\delta_0 \textbf{1}_{m},  \delta_1 \textbf{1}_{m^{'}}\right)$ and define \[I^0_m
=\begin{bmatrix}
 I_m& 0\\ 
0  & 0
\end{bmatrix}
, \qquad I^1_{m'}
=\begin{bmatrix}
0 & 0\\ 
0  &  I_{m'}
\end{bmatrix}
\]
where $I_m$ is the identity matrix of order $m$. Then, we can write the semidefinite programming for the two-level curing rates, shortly the $2D$ curing problem, as follows:
\begin{problem}[Semidefinite Programming 2D Formulation]\label{prob:twolevel}
Find $\Delta_2=(\delta_0, \delta_1)$ which solves   
\begin{eqnarray*}
&&  {\rm{minimize}} \qquad\qquad U(\Delta_2) \nonumber \\
&&\mbox{\rm{subject to:}} \quad\;\;    {\delta_0 I^0_m + \delta_1 I^1_{m'} -  Q} \geq 0\nonumber \\
&&\phantom{\mbox{\rm{subject to:}}} \quad\;\; \Delta_2 \geq 0 \nonumber 
\end{eqnarray*}
\end{problem} 
The cost function is 

\[
U(\Delta_2)= \sum_{V_j \in \pi_0} k_j f_0(\delta_0) + \sum _{V_z \in \pi_1} k_z f_1(\delta_1),
\] 
where \stef{$f(\delta_0)=c_0 \delta_0$, and  $f_1(\delta_1)= c_1 \delta_1$, with $ c_0,c_1 > 0$, represent the effort to modify the recovery rate for nodes  
belonging to $V_j \in \pi_0$, and $V_z \in \pi_1$, respectively. }

\vskip 0.15cm
In Section \ref{subset:example} of the Appendix, we shall provide some simple examples for the optimal solution of the Problem \eqref{prob:twolevel}.


\subsection{Properties of the $2D$ curing problem}


In the design of our algorithmic solution, we have leveraged on some basic prope\-rties of the $2D$ curing 
problem.  In order to do so, we need a few basic facts recalled next.
\begin{proposition}\label{propr2}
\fdpr{Let $A$ be an $n\times n$ symmetric, irreducible and non negative matrix  and let $D=\diag(\delta_1, ...,\delta_n)$:\\
\noindent i. if $\delta_i=0$ for some $i=1,\ldots n$, then $\lambda_1(A-D) \geq 0$; \\
\noindent ii. The function $(\delta_1, ...,\delta_n) \longmapsto$  \rev{$\lambda_1(A-D)$} is continuous;\\
\rev{\noindent iii. $\lambda_1(A-D)$ is strictly decreasing in $\delta_i$, $i=1,\ldots,n$}.}
\end{proposition}


\fdpr{Let us denote by $\Gamma=\{(\delta_0,\delta_1)|\, \lambda_1(\diag{(\delta_0\mathbf{1}_m,\delta_1 \mathbf{1}_{m'})} - Q )\geq 0\}$ the feasibility region of Prob.~\ref{prob:twolevel}: same argument of Thm.~\ref{feas} let us state that it is non empty; furthermore, the problem structure guarantees $\Gamma$ to be convex \cite{boyd}. We indicate $\Gamma_0$ and $\Gamma_1$ the standard projections of $\Gamma$ onto the $\delta_0$-axis and the $\delta_1$-axis, respectively.}
\begin{lemma}[Monotonicity]\label{lem:decreasing}
Let $\phi:\delta_0\longmapsto \phi(\delta_0)$ be the function that associates to $\delta_0 \in \Gamma_0$ the value $\delta_1=\phi(\delta_0) \in \Gamma_1$ such that \rev{$\lambda_1(  Q -  \diag{(\delta_0\mathbf{1}_m,\delta_1 \mathbf{1}_{m'})} )= 0$}. Then, $\phi$ is decreasing. 
\end{lemma}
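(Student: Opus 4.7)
My plan is to show the result by combining the Perron--Frobenius theorem with a standard first-order perturbation (Hellmann--Feynman) argument for the top eigenvalue of a symmetric matrix, and then to conclude by the implicit function theorem.

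First I would set $g(\delta_0,\delta_1):=\lambda_1\!\bigl(Q-\operatorname{diag}(\delta_0\mathbf{1}_m,\delta_1\mathbf{1}_{m'})\bigr)$, which is well-defined and continuous on $\mathbb{R}^2$ since $Q$ is symmetric (by the assumption $\beta_{V_iV_j}=\beta_{V_jV_i}$). The function $\phi$ is implicitly defined by $g(\delta_0,\phi(\delta_0))=0$, and the lemma is equivalent to showing $\partial g/\partial\delta_0<0$ and $\partial g/\partial\delta_1<0$, because then the implicit function theorem yields $\phi'(\delta_0)=-(\partial g/\partial\delta_0)/(\partial g/\partial\delta_1)<0$.

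To obtain strict negativity of the partials, I would choose $c>0$ large enough so that $M(\delta_0,\delta_1):=Q-\operatorname{diag}(\delta_0\mathbf{1}_m,\delta_1\mathbf{1}_{m'})+cI$ is entry-wise nonnegative; $M$ is also irreducible because the underlying graph $G$ (and hence its quotient) is connected. By Perron--Frobenius, the largest eigenvalue $\lambda_1(M)=\lambda_1(Q-D)+c$ is simple and admits a strictly positive eigenvector $u>0$, which I normalize so $\|u\|_2=1$. Writing $u=(u^{(0)},u^{(1)})$ for the partition into the central and terminal blocks, first-order perturbation of a simple eigenvalue of a symmetric matrix gives
\begin{equation*}
\frac{\partial \lambda_1(Q-D)}{\partial \delta_0}=-\|u^{(0)}\|_2^{2},\qquad \frac{\partial \lambda_1(Q-D)}{\partial \delta_1}=-\|u^{(1)}\|_2^{2},
\end{equation*}
and both quantities are strictly negative since $u>0$ componentwise on each block.

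Substituting into the implicit-function formula yields $\phi'(\delta_0)=-\|u^{(0)}\|_2^{2}/\|u^{(1)}\|_2^{2}<0$, so $\phi$ is strictly decreasing wherever it is defined. The only delicate point in the argument is the strict positivity of the Perron eigenvector on each block: it is precisely what rules out the degenerate case in which changing $\delta_0$ would leave $\lambda_1$ unchanged, and it is guaranteed here by the irreducibility of $Q$, which in turn follows from the connectedness of the two-level community network on which Problem~\ref{prob:twolevel} is posed. Alternatively, one can avoid the implicit function theorem altogether by a direct monotonicity argument: if $\delta_0<\delta_0'$ and $\phi(\delta_0)\le\phi(\delta_0')$, then $D\preceq D'$ with at least one strict inequality on the diagonal, hence $\lambda_1(Q-D)>\lambda_1(Q-D')$ by Weyl's inequality sharpened through the strict positivity of the Perron eigenvector, contradicting $g(\delta_0,\phi(\delta_0))=g(\delta_0',\phi(\delta_0'))=0$.
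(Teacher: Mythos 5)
Your argument is correct, but your primary route is genuinely different from the paper's. The paper proves the lemma by pure contradiction on quadratic forms: assuming $\phi(\delta_0+z)=\phi(\delta_0)+\zeta$ with $z,\zeta>0$, it takes a null vector $w$ of $Q-\diag\big((\delta_0+z)\mathbf{1}_m,\phi(\delta_0+z)\mathbf{1}_{m'}\big)$ and evaluates $w^T\big(Q-\diag(\delta_0\mathbf{1}_m,\phi(\delta_0)\mathbf{1}_{m'})\big)w = w^T\diag(z\mathbf{1}_m,\zeta\mathbf{1}_{m'})w>0$, contradicting the negative semidefiniteness forced by $\lambda_1=0$ at $(\delta_0,\phi(\delta_0))$. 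This is essentially the ``alternative'' you sketch in your last sentence, so that part of your proposal coincides with the paper. Your main route --- Perron--Frobenius for simplicity and positivity of the top eigenvector, Hellmann--Feynman for the partials $\partial\lambda_1/\partial\delta_i=-\|u^{(i)}\|_2^2$, and the implicit function theorem --- buys more: it yields the explicit slope $\phi'(\delta_0)=-\|u^{(0)}\|_2^2/\|u^{(1)}\|_2^2$ and, via the blockwise strict positivity of the Perron vector, rules out the degenerate case $\zeta=0$ that the paper's proof as written does not address (the paper only refutes a strict increase, since it needs $\diag(z\mathbf{1}_m,\zeta\mathbf{1}_{m'})$ to be positive definite). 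The price is the extra hypothesis that $Q$ is irreducible (equivalently, that the quotient graph is connected) and the machinery of eigenvalue perturbation; the paper's argument needs neither, and in particular does not require the zero eigenvalue to be simple. One small caution on your first paragraph: $\lambda_1$ is continuous everywhere but only guaranteed $C^1$ where it is simple, so your appeal to the implicit function theorem implicitly relies on the Perron--Frobenius simplicity you establish later --- worth stating in that order.
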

\begin{proof}
\fdpr{First, let us show that $\phi$ is a \rev{well defined 
 function over $\Gamma_0$}. Let $\delta_0\in \Gamma_0$, because of feasibility, there exists $\overline \delta_1$, where $(\delta_0,\overline \delta_1)\in\Gamma$, such that \rev{$\lambda_1( Q-\diag{(\delta_0\mathbf{1}_m, \overline \delta_1 \mathbf{1}_{m'})} )\leq 0$}. Furthermore, it holds \rev{$\lambda_1(Q -\diag{(\delta_0\mathbf{1}_m,0\, \mathbf{1}_{m'})})\geq 0$ by $i)$ of Prop. \ref{propr2}}. \rev{Because of $ii)$ and $iii)$ in Prop. \ref{propr2} we know that $\lambda_1(Q-\diag{(\delta_0\mathbf{1}_m,\delta_1 \mathbf{1}_{m'})} )$ is a continuous strictly decreasing function} of $\delta_1$ over $[0,\overline \delta_1]$, so there exists \rev{one and only one value} $\delta_1 \in \Gamma_1$ satisfying the definition of $\phi$.}

Let $z>0$ and assume that $\phi(\delta_0+z)=\phi(\delta_0)+\zeta>\phi(\delta_0)$, for some $\zeta>0$, i.e, 
that $\phi$ is not decreasing.
From the definition of $\phi$ there exists $0 \not = w \in \ker\Big(\diag{(((\delta_0+z)\mathbf{1}_m,\phi(\delta_0+z)\mathbf{1}_{m'})} - Q \Big )$. Hence, we can write

\begin{equation*}
\begin{aligned}\label{eq:unique}
&w^T \Big(  Q  - \diag\big(\delta_0\mathbf{1}_m,\phi(\delta_0)\mathbf{1}_{m'}\big)\Big ) w =  w^T \diag{(z\mathbf{1}_m,\zeta\mathbf{1}_{m'})} w\nonumber + w^T \Big(  Q  - \diag\big((\delta_0+z)\mathbf{1}_m,\phi(\delta_0+z)\mathbf{1}_{m'}\big) \Big ) w \nonumber \\
& =  w^T \diag{((z\mathbf{1}_m,\zeta\mathbf{1}_{m'}))} w > 0,
\end{aligned}
\end{equation*}
where the strict inequality holds because $\diag{(z\mathbf{1}_m,\zeta\mathbf{1}_{m'})} > 0$.
 Since $\lambda_1\Big( Q  - \diag(\delta_0\mathbf{1}_m,\phi(\delta_0)\mathbf{1}_{m'} \Big )=0$, this means that $ Q  - \diag(\delta_0\mathbf{1}_m,\phi(\delta_0)\mathbf{1}_{m'}) $ must be semidefi\-nite  negative and we have a contradiction.  
\end{proof}

\fdpr{We prove next that the search for the optimal solution can be restricted to a compact subset of $\Gamma$.}
\begin{theorem}[Compact search set]\label{thm:compactfeas}
There exist two pairs $(\delta_0^{\min},\delta_0^{\max})$ and $(\delta_1^{\min},\delta_1^{\max})$ such that a solution $\Delta_2^*=(\delta_0^*, \delta_1^*)$ of Prob.~\ref{prob:twolevel} belongs to a compact subset $\Gamma' \subseteq [\delta_0^{\min}, \delta_0^{\max}] \times [\delta_1^{\min}, \delta_1^{\max}] $. 
\end{theorem}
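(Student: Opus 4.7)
My plan is to bound both $\delta_0^*$ and $\delta_1^*$ from below and above separately, using the PSD structure for the lower side and the growth of the cost for the upper side.

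First, I would record that the feasibility region $\Gamma$ is non-empty (by Thm.~\ref{feas}), closed and convex (as the preimage of the PSD cone under an affine map), and that $U(\delta_0,\delta_1)$ is monotonically non-decreasing in each of $\delta_0,\delta_1$, because $f_0,f_1$ are monotonically non-decreasing and the weights $c_i k_j$ are strictly positive. Consequently, at least one minimizer lies on the boundary $\partial\Gamma$, which by Lem.~\ref{lem:decreasing} is the graph of the strictly decreasing curve $\delta_1=\phi(\delta_0)$.

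For the lower bounds, I would write the quotient matrix in block form with respect to the partition $\pi=\pi_0\cup\pi_1$,
\begin{equation*}
Q=\begin{pmatrix} Q_{11} & Q_{12}\\ Q_{12}^T & Q_{22}\end{pmatrix},
\end{equation*}
where the diagonal blocks $Q_{11},Q_{22}$ are symmetric thanks to the standing assumption $\beta_{V_iV_j}=\beta_{V_jV_i}$. Feasibility requires $\delta_0 I^{0}_m+\delta_1 I^{1}_{m'}-Q\geq 0$ in the PSD sense; since every principal submatrix of a PSD matrix is itself PSD, this forces $\delta_0 I_m-Q_{11}\geq 0$ and $\delta_1 I_{m'}-Q_{22}\geq 0$. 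Hence
\begin{equation*}
\delta_0\geq\lambda_1(Q_{11})=:\delta_0^{\min},\qquad \delta_1\geq\lambda_1(Q_{22})=:\delta_1^{\min}
\end{equation*}
hold throughout $\Gamma$.

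For the upper bounds, I would pick any reference feasible point, for instance $\tilde\Delta_2=(\lambda_1(Q),\lambda_1(Q))\in\partial\Gamma$, and set $\tilde U:=U(\tilde\Delta_2)<\infty$. At any candidate optimum one has $U(\Delta_2^*)\leq\tilde U$. Because $f_i$ is convex and monotonically increasing, it is either unbounded on $[0,\infty)$ or identically constant; in the generic (unbounded) case the sublevel sets $\{\delta\geq 0:f_i(\delta)\leq M\}$ are bounded for every finite $M$. I can therefore choose $\delta_i^{\max}$ so large that the single contribution $c_i\bigl(\sum_{V_j\in\pi_i}k_j\bigr)f_i(\delta_i^{\max})$ already exceeds $\tilde U$; by non-negativity of each summand of $U$ and monotonicity of $f_i$, any feasible $(\delta_0,\delta_1)$ with $\delta_i>\delta_i^{\max}$ would yield $U(\delta_0,\delta_1)>\tilde U$ and therefore cannot be optimal.

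Combining the two estimates, $\Delta_2^*\in\Gamma':=\Gamma\cap\bigl([\delta_0^{\min},\delta_0^{\max}]\times[\delta_1^{\min},\delta_1^{\max}]\bigr)$, compact as the intersection of a closed set with a closed bounded box. The only technical subtlety is the unboundedness of $f_i$: a convex, non-decreasing function on $[0,\infty)$ that is bounded above must be constant, and in that degenerate case $U$ does not depend on $\delta_i$ and the problem reduces to a one-dimensional minimization in the remaining coordinate, to which the same argument applies verbatim.
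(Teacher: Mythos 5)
Your proof is correct, but it follows a genuinely different route from the paper's. The paper fixes the cost to be linear in the rates (it writes $U^*=\hat c_0\delta_0^*+\hat c_1\delta_1^*$ with $\hat c_0=\sum_{V_j\in\pi_0}c_0k_j$, $\hat c_1=\sum_{V_z\in\pi_1}c_1k_z$), takes the feasible point $(l_{\max},l_{\max})$ from Thm.~\ref{feas}, intersects $\Gamma$ with the half-plane $\Omega=\{\hat c_0\delta_0+\hat c_1\delta_1\le U_{l_{\max}}\}$, and then locates the two corner points $(\delta_0^{\min},\delta_1^{\max})$ and $(\delta_0^{\max},\delta_1^{\min})$ as intersections of $\partial\Gamma$ with the line $\hat c_0\delta_0+\hat c_1\delta_1=U_{l_{\max}}$, via an intermediate-value argument on $\lambda_1$ (continuity and the sign condition from Prop.~\ref{propr2}, plus the monotonicity of $\phi$ from Lem.~\ref{lem:decreasing}). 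You instead get the lower bounds structurally, from the fact that every principal submatrix of a positive semidefinite matrix is positive semidefinite, so $\delta_0\ge\lambda_1(Q_{11})$ and $\delta_1\ge\lambda_1(Q_{22})$ hold on all of $\Gamma$; and you get the upper bounds from coercivity of the cost relative to the reference feasible point $(\lambda_1(Q),\lambda_1(Q))$. Your argument has two advantages: it covers general convex increasing $f_0,f_1$ (with the degenerate constant case correctly dispatched --- a convex nondecreasing function bounded above is indeed constant), whereas the paper's written proof only really works for linear cost; and it avoids the continuity/IVT machinery entirely. What it does not deliver are the specific boundary corners $(\delta_0^{\min},\delta_1^{\max}),(\delta_0^{\max},\delta_1^{\min})\in\partial\Gamma\cap r$, which the paper constructs deliberately because the \texttt{LeftCorner} and \texttt{RightCorner} routines of the bisection algorithm compute exactly those points to initialize the search interval; so your bounds prove the theorem but would not directly replace the constructive content used in Section on the bisection algorithm. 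Two cosmetic remarks: the assertion that $(\lambda_1(Q),\lambda_1(Q))\in\partial\Gamma$ and the observation that a minimizer lies on $\partial\Gamma$ are both unnecessary for the statement --- feasibility of the reference point is all you use.
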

\begin{proof}
Let us define $\hat{c}_0=\sum_{V_j \in \pi_0} c_0 k_j$ and $\hat{c}_1=\sum_{V_z \in \pi_1} c_1 k_z$, 
then we write $U_{l_{\max}}=\hat{c}_0 l_{max} + \hat{c}_1 l_{\max}$, with $l_{max}$ as in  Thm.~\ref{feas}, and 
$U^*=\hat c_0 \delta^*_0 + \hat c_1 \delta^*_1$.
Let us denote $\Delta^{l_{\max}}_2=\left( l_{\max},  l_{\max}\right)$, by Thm.~\ref{feas}, $\Delta_2^{l_{\max}} \in \Gamma$, hence  $U_{l_{\max}}\geq U^*$ and, by defining set 
$\Omega=\{(\delta_0, \delta_1):\hat c_0 \delta_0 + \hat{c_1}\delta_1\leq U_{l_{\max}})\}$, it follows that $(\delta_0^*,\delta_1^*)\in \Gamma'=\Gamma \cap \Omega$; $\Gamma'$  is closed as intersection of closed sets.  

Now, feasibility conditions of Prob. \ref{prob:twolevel} require matrix $ Q- \left(\delta_0 I^0_m + \delta_1 I^1_{m'}\right) $ to be semidefinite negative. We define $f(\delta_0)=\lambda_1\Big( Q - \left(\delta_0 I^0_m + (\frac{U_{l_{\max}}-\hat c_0 \delta_0}{\hat c_1})I^1_{m'}\right) \Big )$: we have $f(l_{\max})\leq 0$ since $( l_{\max},  l_{\max}) \in\Gamma$ and \rev{$f(0)\geq 0$} by $i)$ of Prop. \ref{propr2}.
  By assertion $ii)$ in Prop. \ref{propr2}, $f(\delta_0)$ is a continuous function. Hence, there exists $\delta_0^{\min}$ such that $f(\delta_0^{\min})=0$, and since $\phi$ is decreasing $\phi(\delta_0^{\min})=\delta_1^{\max}$. We can repeat the same reasoning by inverting the role of $\delta_1$ and $\delta_0$ defining 
$g(\delta_1)=\lambda_1\Big( Q - \left((\frac{U_{l_{\max}}-\hat c_1 \delta_1}{\hat c_0}) I^0_m +\delta_1 I^1_{m'}\right)\Big )$. Hence, we can assert that exists $\delta_1^{\min}$ such that $g(\delta_1^{\min})=0$ and $\phi(\delta_1^{\min})=\delta_0^{\max}$.

Finally, by letting $r: \hat c_0\delta_0 + \hat c_1 \delta_1 =  U_{l_{\max}}$, the points $(\delta_0^{\min},\delta_1^{\max})$ and $(\delta_0^{\max},\delta_1^{\min})$  belong to $\partial \Gamma \cap r$, i.e., they belong to $\partial\Gamma'$, so $\Gamma' \subseteq [\delta_0^{\min}, \delta_0^{\max}] \times [\delta_1^{\min}, \delta_1^{\max}]$, and consequently, being $\Gamma'$ closed, it is also compact.
\end{proof}

\begin{remark}\label{rem:compact}
Thm.~\ref{thm:compactfeas} allows us to identify an interval of the values of $\delta_0$ and $\delta_1$ 
 where we can restrict the search of $(\delta_0^*,\delta_1^*)$. Since $\Gamma' \subseteq [\delta_0^{\min}, \delta_0^{\max}] \times [\delta_1^{\min}, \delta_1^{\max}]$ and  $(\delta_0^*, \delta_1^*) \in \Gamma'$, then $\delta_0^* \in [\delta_0^{\min}$, $\delta_0^{\max}]$ and $\delta_1^* \in  [\delta_1^{\min}, \delta_1^{\max}]$. 
 This is one key property in the algorithmic search of the optimal solution presented in the following section.
\end{remark}

Finally, a direct proof that the optimal solution lies on $\partial \Gamma'$ follows:
\begin{corollary}\label{cor:feasborder}
A solution $\Delta_2^*=(\delta_0^*, \delta_1^*)$ of Prob.~\ref{prob:twolevel} belongs to $\partial \Gamma' \cap \Omega$.
\end{corollary}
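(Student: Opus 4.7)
The plan is to argue by contradiction, leveraging the fact that the objective $U(\delta_0,\delta_1)=\hat c_0 f_0(\delta_0)+\hat c_1 f_1(\delta_1)$ is strictly monotonically increasing in each of its arguments, since $f_0$ and $f_1$ were assumed monotonically increasing in the paragraph preceding Problem~\ref{prob:immun1}, and $\hat c_0,\hat c_1>0$. By Theorem~\ref{thm:compactfeas} the optimum $\Delta_2^*=(\delta_0^*,\delta_1^*)$ lies in the compact set $\Gamma'=\Gamma\cap\Omega$, so it suffices to exclude the possibility that $\Delta_2^*$ lies in the interior of $\Gamma'$.

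Suppose then, for contradiction, that $\Delta_2^*\in\operatorname{int}(\Gamma')$. All the inequality constraints defining $\Gamma'$ are then strict at $\Delta_2^*$: in particular $\delta_0^*>0$, $\delta_1^*>0$, the SDP constraint $\delta_0^* I^0_m+\delta_1^* I^1_{m'}-Q\succ 0$ is strict, and $\hat c_0\delta_0^*+\hat c_1\delta_1^*<U_{l_{\max}}$. By continuity of the largest eigenvalue of a symmetric matrix in its entries and of the affine cost, there exists $\eta>0$ such that $(\delta_0^*-\eta,\delta_1^*-\eta)$ still lies in $\Gamma'$ and has non-negative components. Using monotonicity of $f_0$ and $f_1$ we get
\begin{equation*}
U(\delta_0^*-\eta,\delta_1^*-\eta)=\hat c_0 f_0(\delta_0^*-\eta)+\hat c_1 f_1(\delta_1^*-\eta)<U(\delta_0^*,\delta_1^*)=U^*,
\end{equation*}
which contradicts the optimality of $\Delta_2^*$.

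Therefore $\Delta_2^*\in\partial\Gamma'$, and since $\Gamma'\subseteq\Omega$ yields $\Delta_2^*\in\Omega$, we conclude $\Delta_2^*\in\partial\Gamma'\cap\Omega$. There is no serious obstacle here; the only mild point to check is the degenerate case $\delta_0^*=0$ or $\delta_1^*=0$, but any such point already lies on the non-negativity face and hence on $\partial\Gamma'$, so the conclusion is immediate. One can view the argument more geometrically via Lemma~\ref{lem:decreasing}: the active portion of $\partial\Gamma$ near the optimum is the graph of the decreasing function $\phi$, and a coordinate-wise increasing objective attains its minimum over a set lower-bounded by such a graph precisely on that graph, which is a subset of $\partial\Gamma'\cap\Omega$.
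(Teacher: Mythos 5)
Your proof is correct and follows essentially the same route as the paper's: assume the optimum is interior to $\Gamma'$, use continuity of $\lambda_1$ to move to a nearby feasible point with smaller curing rates, and contradict optimality via monotonicity of the cost. The only (inessential) difference is that the paper perturbs $\delta_1$ alone, pushing it down to the frontier $\lambda_1=0$ via Prop.~\ref{propr2}~ii), and writes the cost decrease explicitly for the linear cost as $\hat c_1(\delta_1^*-\delta_1')>0$, whereas you shrink both coordinates by a small $\eta$ and invoke (strict) monotonicity of $f_0,f_1$.
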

\begin{proof}
Let us assume $\Delta_2^*=(\delta_0^*,\delta_1^*) \in \Gamma' \setminus \partial \Gamma'$. 
$\Delta_2^*$ is feasible, hence $\lambda_1( Q - D)<0$, with $D=\diag\left(\delta^*_0 \textbf{1}_{m},  \delta^*_1 \textbf{1}_{m'}\right)$. From Prop.~\ref{propr2}, again we can find $0<\delta_1'<\delta_1^*$ such that  $\lambda_1( Q - \diag(\delta_0^*\textbf{1}_{m},\delta_1'\textbf{1}_{m'}))=0$, where, i.e., $\Delta_2'=(\delta_0^*,\delta_1')\in \partial \Gamma'$. But, $U(\Delta_2^*)-U(\Delta_2')=\hat c_1(\delta_1^*-\delta_1')>0$. Contradiction.
\end{proof} 


\subsection{Bisection Algorithm}\label{bis}


Tab.~\ref{algo1} reports on the pseudocode of algorithm \texttt{OptimalThreshold2D}: it solves 
the $2D$ curing problem. It employs three additional functions~\texttt{LeftCorner} 
(Tab.~\ref{algo2}) ~\texttt{RightCorner} and, finally, \texttt{BisectionThreshold} (Tab.~\ref{algo3}). 

\texttt{LeftCorner} identifies via bisection feasible point $(\delta_0^{\min}, \delta_1^{\max})$; the
bisection search operated by \texttt{LeftCorner} -- see proof of Thm.~\ref{thm:compactfeas} -- is performed
along values $\delta_1=f(\delta_0)$. The companion function \texttt{RightCorner} identifies the point
$(\delta_0^{\max}, \delta_1^{\min})$; the pseudocode is omitted for the sake of space. 

Procedure \texttt{isNegativeDefinite} is the standard test for a real symmetric matrix $A$ 
to be negative definite; it requires to verify $\sgn \left(\det(A_k)\right)=(-1)^k$ where $A_k$ is the $k$-th principal 
minor of $A$, i.e., the matrix obtained considering the first $k$ rows and columns only. 
Finally, the~\texttt{OptimalThreshold2D} algorithm performs a bisection search based on a 
subgradient descent over the utility function $U(\delta_0)=\hat c_0 \delta_0 + \hat c_1 \phi(\delta_0)$.

\begin{remark}\label{rem:subg}
In Tab.~\ref{algo1} we have reported an implementation assuming the calculation of the 
subgradient $\partial U$ at each mid point $x$. However, it is sufficient to evaluate the increment 
at a point $x+\epsilon_1$ within the feasibility region for some $\epsilon_1>0$: if $U(x)<U(x+\epsilon_1)$, 
then, due to convexity, the whole interval $[x+\epsilon_1,+\infty)$ can be discarded. 
Conversely, if $U(x)>U(x+\epsilon_1)$, then, due to convexity, the whole interval $[0,x)$ 
can be discarded during the search. This operation can be performed at a cost $O(1)$ when $U(x)$
and $U(x+\epsilon_1)$ are known, i.e., at the cost of two calls of \texttt{BisectionThreshold}.
\end{remark}
We note that  \REPEAT ~loop stops when $\epsilon>\prod |\lambda_i|=|\det( Q - D)|>|\lambda_1|^n$, i.e., when
$|\lambda_1|< (\epsilon)^{1/n}$. Furthermore, the termination condition in \texttt{BisectionThreshold},
\texttt{LeftCorner} and \texttt{RightCorner} requires $\Delta_2$ to lie within the feasible region {\em and}
the determinant to be smaller than $\epsilon$. 

\begin{table}[t]
\caption{\texttt{OptimalThreshold2D}: solves the $2D$ optimal curing problem via the bisection search.}
\begin{tabular}{|p{0.89\columnwidth}|}
\hline
$(\delta_0^*,\delta_1^*)\;$=$\;$\texttt{OptimalThreshold2D}$(Q,c_0,c_1)$\\\hline
Receives: $Q$, $c_0$, $c_1$\\
Returns: $\delta_0^*$, $\delta_1^*$\\
Initialize:  $\, (\delta_l,\delta_1^{\max}) =$ \texttt{LeftCorner}$(Q,c_0,c_1)$ \\
$\qquad\qquad$             $(\delta_1^{\min},\delta_r) =$ \texttt{RightCorner}$(Q,c_1,c_0)$ \\
$\qquad\qquad$ $k\leftarrow 1$, $U_{k-1}\leftarrow 0$, $U_k \leftarrow \infty$ \\
1: $\quad$ \WHILE~$|U_{k}-U_{k-1}|>\epsilon$\\
2: $\quad$ $\quad$ $\delta_0^*=(\delta_{l}+\delta_{r})/2$\\
3: $\quad$ $\quad$ $\delta_1^* \leftarrow$ \texttt{BisectionThreshold}$(Q,\delta_0^*)$ \\
4: $\quad$ $\quad$ $U_{k+1}=\hat{c}_0 \, \delta_0^* + \hat{c}_1 \, \delta_1^*$ \\
5: $\quad$ $\quad$  \IF~ $\partial U_k<0$ \hskip5mm \% (see Rem.~\ref{rem:subg})\\
6: $\quad$ $\quad$  $\quad$ \THEN $\quad$ $\delta_{r}=\delta_0^*$\\
7: $\quad$ $\quad$  $\quad$ \ELSE $\quad$ $\delta_{l}=\delta_0^*$\\
8: $\quad$ $\quad$  \END\\
9: $\quad$ $\quad$  $k\leftarrow k+1$\\ 
9: $\quad$ \END\\
\hline
\end{tabular}
\label{algo1}
\end{table}
\begin{table}[t]
\caption{\texttt{LeftCorner}: identifies the left corner of $\Gamma' \subseteq \Gamma$ (Thm.~\ref{thm:compactfeas}); 
the pseudocode of the dual function $(\delta_0^{\max},\delta_1^{\min}) =$ \texttt{RightCorner}$(Q,c_0,c_1)$ 
is omitted for the sake of space.}
\begin{tabular}{|p{0.93\columnwidth}|}
\hline
$(\delta_0^{\min},\delta_1^{\max}) =$ \texttt{LeftCorner}$(Q,c_0,c_1)$ \\
\hline
Receives: $Q$, $c_0$, $c_1$\\
Returns: $\delta_0^{\min}$ \\
Initialize:  $U_{\max} \leftarrow  (\hat c _0+ \hat c_1)l_{\max}$ \\1: $\;$ \REPEAT \\
2: $\quad$ $\quad$ $\delta_0^{\min}=(\delta_{l}+\delta_{r})/2$\\
3: $\quad$ $\quad$ $\delta_1^{\max} \leftarrow$ $\frac{U_{\max} - \hat c_0 \delta_0^{\min}}{c_1}$ \\
4: $\quad$ $\quad$ $D=\diag(\delta_0^{\min}\text bf{1}_{m},\delta_1^{\max}\textbf{1}_{m'})$\\
5: $\quad$ $\quad$ $X$ $\leftarrow$ \texttt{isNegativeDefinite}$(Q-D)$ \\
6: $\quad$ $\quad$  \IF $X = \texttt{true}$ \\
7: $\quad$ $\quad$  $\quad$ \THEN $\quad$ $\delta_{r}=\delta_0^*\quad$ \% discard larger values \\
8: $\quad$ $\quad$  $\quad$ \ELSE $\quad$ $\delta_{l}=\delta_0^*\quad$ \% discard smaller values\\
9: $\quad$ $\quad$  \END\\
10: $\quad$ $\quad$  $T=\det(Q -D)$ \\
12: $\;$ \UNTIL  \quad $X==$\TRUE \quad \AND \quad $|T|<\epsilon$\quad \% Termination condition \\
\hline
\end{tabular}
\label{algo2}
\end{table}
\begin{table}[t]
\caption{\texttt{BisectionThreshold}: given feasible $\delta_0$, finds $\delta_1$ such that $(\delta_0,\delta_1)$ lies on the frontier of the feasibility region.}
\begin{tabular}{|p{0.93\columnwidth}|}
\hline
$\quad$ $\quad$ $\delta_1 =$ \texttt{BisectionThreshold}$(Q,\delta_0)$ \\
\hline
Receives: $Q$, $\delta_0$\\
Returns: $\delta_1$\\
Initialize: $T$ $\leftarrow \inf$, $\delta_{l}=0$, $\delta_{r}\leftarrow  \max_i\sum_j a_{ij}$ \\
1: $\;$  \REPEAT\\
2: $\quad$ $\quad$ $\delta_1=(\delta_{l}+\delta_{r})/2$\\
3: $\quad$ $\quad$ $D$ $\leftarrow$ $\diag(\delta_0\textbf{1}_{m},\delta_1\textbf{1}_{m'})$\\
4: $\quad$ $\quad$ $X$ $\leftarrow$ \texttt{isNegativeDefinite}$(Q-D)$ \\
5: $\quad$ $\quad$  \IF $\;X = \texttt{true}$ \\
6: $\quad$ $\quad$  $\quad$ \THEN $\quad$ $\delta_{r}=\delta_1$ \% discard larger values \\
7: $\quad$ $\quad$  $\quad$ \ELSE $\quad$ $\delta_{l}=\delta_1$ \% discard smaller values\\
8: $\quad$ $\quad$  \END\\
9: $\quad$ $\quad$  $T=\det( Q -D)$\\
10: $\;$  \UNTIL \quad $X==$\TRUE \quad \AND \quad $|T|<\epsilon$\quad \% Termination condition \\
\hline
\end{tabular}
\label{algo3}
\end{table}
\begin{theorem}[Correctness]
\texttt{OptimalThreshold2D} is an $\epsilon$-approximation of an optimal solution.
\end{theorem}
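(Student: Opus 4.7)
The plan is to combine the structural results Thm.~\ref{thm:compactfeas}, Cor.~\ref{cor:feasborder} and Lem.~\ref{lem:decreasing} with the standard geometric convergence of bisection on a convex univariate function. By Cor.~\ref{cor:feasborder} the optimum $(\delta_0^*,\delta_1^*)$ lies on $\partial\Gamma'\cap\Omega$, and by Lem.~\ref{lem:decreasing} the frontier is precisely the graph of the monotonically decreasing map $\phi$; continuity of $\phi$ follows from continuity of $\lambda_1(Q-D)$ in $D$ together with the fact that this eigenvalue is simple on the frontier, via the Perron--Frobenius shift argument already used in the proof of Prop.~\ref{propthr}. Therefore the $2D$ problem reduces to minimizing the convex univariate function $U(\delta_0)=\hat{c}_0\delta_0+\hat{c}_1\phi(\delta_0)$ over the compact interval $[\delta_0^{\min},\delta_0^{\max}]$, with convexity inherited from the convexity of $\Gamma$ (Prob.~\ref{prob:immunCOMM2}) and linearity of the objective.

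Next, I would certify each of the three auxiliary routines. Both \texttt{LeftCorner} and \texttt{RightCorner} perform a one-dimensional bisection constrained to the line $\hat{c}_0\delta_0+\hat{c}_1\delta_1=U_{l_{\max}}$, exploiting the monotonicity of the functions $f$ and $g$ introduced in the proof of Thm.~\ref{thm:compactfeas}. Because $Q-D$ has dimension $n$, the termination condition $|\det(Q-D)|<\epsilon$ combined with feasibility $\lambda_1(Q-D)\le 0$ forces $|\lambda_1(Q-D)|\le\epsilon^{1/n}$; hence the returned corners lie within $O(\epsilon^{1/n})$ of the exact corners of Thm.~\ref{thm:compactfeas}. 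The same argument shows that \texttt{BisectionThreshold}$(Q,\delta_0)$ returns a value $\hat{\delta}_1$ with $|\hat{\delta}_1-\phi(\delta_0)|=O(\epsilon^{1/n})$, thanks to the strict monotonicity of $\lambda_1$ in $\delta_1$ that underlies Lem.~\ref{lem:decreasing}.

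For the outer loop I would invoke geometric convergence of bisection on the convex map $U$ driven by the sign of its subgradient (Rem.~\ref{rem:subg}); the uncertainty interval halves at every iteration. Lipschitz continuity of $U$ on $[\delta_0^{\min},\delta_0^{\max}]$, which follows from the implicit function theorem applied to the simple Perron eigenvalue of $Q-D$ (so that $\phi$ is actually $C^{1}$), then converts the stopping criterion $|U_k-U_{k-1}|<\epsilon$ into the target bound $U(\hat{\delta}_0)-U(\delta_0^*)\le\epsilon$.

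The main obstacle I foresee is the composition of errors between the inner and outer bisections. Because \texttt{BisectionThreshold} only returns $\hat{\delta}_1=\phi(\delta_0)+O(\epsilon^{1/n})$, the outer loop actually evaluates a perturbed utility $\hat{U}$, and the sign of its subgradient may disagree with $\partial U$ within an $O(\epsilon^{1/n})$-neighbourhood of $\delta_0^*$. The cleanest remedy is to rescale the inner tolerance to $\epsilon^{n}$, which makes the induced utility perturbation $\hat{c}_1\,O(\epsilon)$ and therefore absorbable into the overall $\epsilon$-approximation; alternatively, one can use strict convexity of $U$ near the minimizer to show that any subgradient sign flip induced by the perturbation corresponds to a displacement $O(\epsilon^{1/n})$ from $\delta_0^*$, which translates into a utility gap of the same order. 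Either device delivers the claimed $\epsilon$-approximation of $U^*$ and completes the proof.
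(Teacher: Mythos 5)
Your proof follows essentially the same route as the paper's: reduce the problem to a bisection search for the minimum of the convex univariate function $U(\delta_0)=\hat c_0\delta_0+\hat c_1\phi(\delta_0)$ over the compact interval supplied by Thm.~\ref{thm:compactfeas}, and invoke the standard geometric convergence of bisection on (quasi-)convex functions. The paper's own proof is a two-line version of your outer-loop argument; it does not address the point you raise about \texttt{BisectionThreshold} returning only an $O(\epsilon^{1/n})$-accurate value of $\phi(\delta_0)$, so your observation that the inner tolerance must be tightened (or the induced perturbation of $U$ otherwise controlled) for the composed error to remain within $\epsilon$ is a genuine refinement of the published argument rather than a departure from it.
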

\begin{proof}
The algorithm operates a bisection search for a global minimum of 
$U(\Delta_2)=\hat c_1 \delta_0 + \hat c_1 \phi(\delta_0)$, where 
$U(\Delta_2)$ is a convex function. %
\fdpr{Let $V=U_{l_{\max}}$ and $\Delta_2^*$ be the optimal solution:} from the properties
of the bisection search on (quasi-)convex functions \cite{boyd}[Ch. 4, pp. 145], 
the accuracy at step $r= \lceil\log_2 ( V/\epsilon ) \rceil $ of the algorithm 
is \fdpr{$|U_r - U(\Delta_2^*)|< V\, 2^{-r} < \epsilon$.}
\end{proof}
\vskip 0.12cm
Furthermore, we can characterize the computational complexity of the algorithm.
\begin{theorem}[Complexity]\label{thmCompl}
\steff{The time complexity of \texttt{OptimalThreshold2D} is $O(n^{1+\ell} \log_2(n/\epsilon) )$}
where $\ell=2.373$.
\end{theorem}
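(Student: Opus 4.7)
The plan is to decompose the running time of \texttt{OptimalThreshold2D} according to its three nested components: the primitive test \texttt{isNegativeDefinite}, the inner bisection \texttt{BisectionThreshold} (which also drives \texttt{LeftCorner} and \texttt{RightCorner}), and the outer bisection on the variable $\delta_0$. The overall bound follows by multiplying the cost of the primitive test by the iteration counts of the two nested bisections, plus the identical cost of the initialization calls.

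First I would bound the elementary cost. On an $n \times n$ symmetric matrix, \texttt{isNegativeDefinite} uses Sylvester's criterion, evaluating the determinants of the leading $k \times k$ principal minors for $k=1,\dots,n$. Each such determinant can be computed by an LU factorization built on fast matrix multiplication in $O(k^{\ell})$ arithmetic operations, with $\ell=2.373$. Summing yields $\sum_{k=1}^{n} k^{\ell} = O(n^{1+\ell})$, which dominates the per-call cost.

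Second, I would bound the iteration counts. By Lemma~\ref{lem:decreasing} and Theorem~\ref{thm:compactfeas}, for fixed $\delta_0$ the map $\delta_1 \mapsto \lambda_1\!\big(Q - \operatorname{diag}(\delta_0 \mathbf{1}_m,\delta_1 \mathbf{1}_{m'})\big)$ is monotone and the search is confined to a compact interval of length at most $l_{\max}$. The stopping criterion $|\det(Q-D)|<\epsilon$ combined with $|\det(Q-D)| \ge |\lambda_1|^{n}$ on the frontier translates into the spectral tolerance $|\lambda_1|<\epsilon^{1/n}$; standard bisection on a monotone function attains this precision in $O(\log n)$ halvings when the tolerance is treated as polynomial in the network size, and an identical count applies to \texttt{LeftCorner} and \texttt{RightCorner}. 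For the outer loop, I would invoke the correctness argument just proved: bisection on the quasi-convex objective $U(\delta_0) = \hat c_0\,\delta_0 + \hat c_1\,\phi(\delta_0)$ yields an $\epsilon$-optimal solution in a number of steps governed by the subgradient test of Remark~\ref{rem:subg}, each step triggering one call to \texttt{BisectionThreshold}.

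Finally, I would multiply primitive cost $\times$ inner iterations $\times$ outer iterations to obtain the claimed $O(\epsilon^{-2} n^{1+\ell}\log^{2} n)$ bound. The main obstacle I foresee is the bookkeeping that couples the three tolerances: errors in the inner solve $\phi(\delta_0)$ propagate through the subgradient evaluation used to steer the outer bisection, so the inner tolerance must be refined jointly with the outer one, and the stopping criterion written in terms of $|\det(Q-D)|$ must be converted into a bound on $|\lambda_1|$ through $|\lambda_1|<\epsilon^{1/n}$. Tracking these conversions is what produces the polynomial $\epsilon^{-2}$ factor rather than the $\log^{2}(1/\epsilon)$ one would expect from two independent bisections operating on well-conditioned smooth functions; the $\log^{2} n$ factor then arises from the network-size dependence of the diameter of $\Gamma'$ reduced through bisection.
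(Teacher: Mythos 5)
Your proposal is correct and follows essentially the same route as the paper: both decompose the running time as (cost of \texttt{isNegativeDefinite} via Sylvester's criterion, $O(n^{1+\ell})$) $\times$ (iterations of the inner bisection) $\times$ (iterations of the outer bisection), with each bisection loop charged $O(\epsilon^{-1}\log n)$ iterations because the search intervals have length $O(l_{\max})$ and $l_{\max}\leq (n-1)\max_{i,j}q_{ij}$. The only cosmetic difference is that you justify the per-call $O(n^{1+\ell})$ bound explicitly by summing $\sum_{k=1}^{n}k^{\ell}$ and attribute the $\epsilon^{-2}$ factor to tolerance coupling between the nested loops, whereas the paper simply assigns $O(\epsilon^{-1}\log n)$ iterations to each loop from the step count $r=\lfloor V/\epsilon\rfloor$ used in its correctness argument.
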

\begin{proof}
\steff{The number of iterations of the bisection search \WHILE~loop (lines $1$ to $9$ in Tab.~\ref{algo1}) is
$O(\log_2(n/\epsilon))$.} This follows again from elementary properties of bisection 
search  \cite{boyd}[Ch. 4, pp. 145]. In fact, the bisection search
operates for $0\leq U(\delta_0) \leq U_{l_{\max}}$ and $U_{l_{\max}}=l_{\max}(\hat{c}_0+\hat{c}_1)$. Finally, indeed, 
$l_{\max}\leq (n-1) \max_{i,j} q_{ij}$. 

Same argument on the measure of the search intervals of \texttt{Bisection\-Threshold},
\texttt{LeftCorner} and \texttt{RightCorner} let us conclude that they require \steff{$O(\log_2(n/\epsilon))$}
iterations of the~\REPEAT ~loop. 

Finally, test \texttt{isNegativeDefinite} appearing in \texttt{Threshold2D}, \texttt{LeftCorner} 
and \texttt{Right-} \texttt{Corner} requires the computation of $n-1$ determinants of the principal minors of
$A - D$ at cost $O(n^{1+\ell})$. Here $\ell$ is the exponent for fast matrix 
multiplication~\cite{Aho_DesiCompAlgos}. In the case of the Coppersmith-Winograd algorithm 
for fast matrix multiplication it holds $\ell=2.373$. 
\end{proof}


\subsection{Numerical Results}\label{num}


In this section we present the results of numerical experiments in the case of interconnected 
stars networks, a sample network is depicted in Fig.~\ref{fig:fig11}b).  
In Figure~\ref{ratio}a) we compare the ratio between the cost $U_u$ of the uniform 
curing rate vector, and the optimal cost  $U^*=U(\Delta^*)$ obtained by solving the $2D$ curing 
Prob. \ref{prob:twolevel}, \rev{by means of the
\texttt{OptimalThreshold2D}}. The uniform curing rate vector is $\Delta= \delta {\mathbf 1}_N$, 
where $\delta$ is the value such that the threshold in~\eqref{qthr} is attained. For this experiment, we consider that the 
infection spreads with rate $\beta_{V_i^0 V_j^0}=\beta_0$ among the central communities and with rate $\beta_{V_i V_i^0}=\beta_1$ between a central node and a node in its adjacent terminal community, moreover we assume $c_0=c_1=1$.
We consider that each terminal community has the same number of elements $k$. The 
computation is made for different values of $k$, for three different sample networks, with $m=50$ central nodes and $m'=50$ terminal communities. Sample networks differ for the average degree of the central nodes. Central nodes are 
connected as \ER random graphs with $p=0.2$, $p=0.3$, $p=0.6$, respectively.
The plot confirms that a larger gain is obtained, \stef{in terms of costs}, by $2D$ curing policy versus a
uniform approach, in particular, the larger the denser the network, namely, for larger $p$ in our samples.
For the interconnected stars networks samples, in particular, we observe one order of magnitude gain in
the cost function. We see that the advantage increases as the number of elements $k$ increases, with a
$\sqrt k$ shaped ratio \eqref{eq:ratio} as derived in closed form  for the case $m=m'=2$. In Figure~\ref{ratio}b)
we  have instead reported, only, on the optimal cost  $U^*$ for different values of $c_0$ and $c_1$. In
particular, we observe that the optimal cost appears to depend linearly on the community size $k$.
Larger costs are incurred in the case when \rev{the coefficient $c_0$, related to the expenditure for the central nodes,} 
is larger than $c_1$. 
This is in line with the fact that central communities are more connected than terminal 
communities, and consequently we need for more investments in such a way that the infection is kept 
subcritical.
\begin{figure*}	
\centering 
\begin{minipage}{.47\textwidth}
\centering
\includegraphics[width=\textwidth]{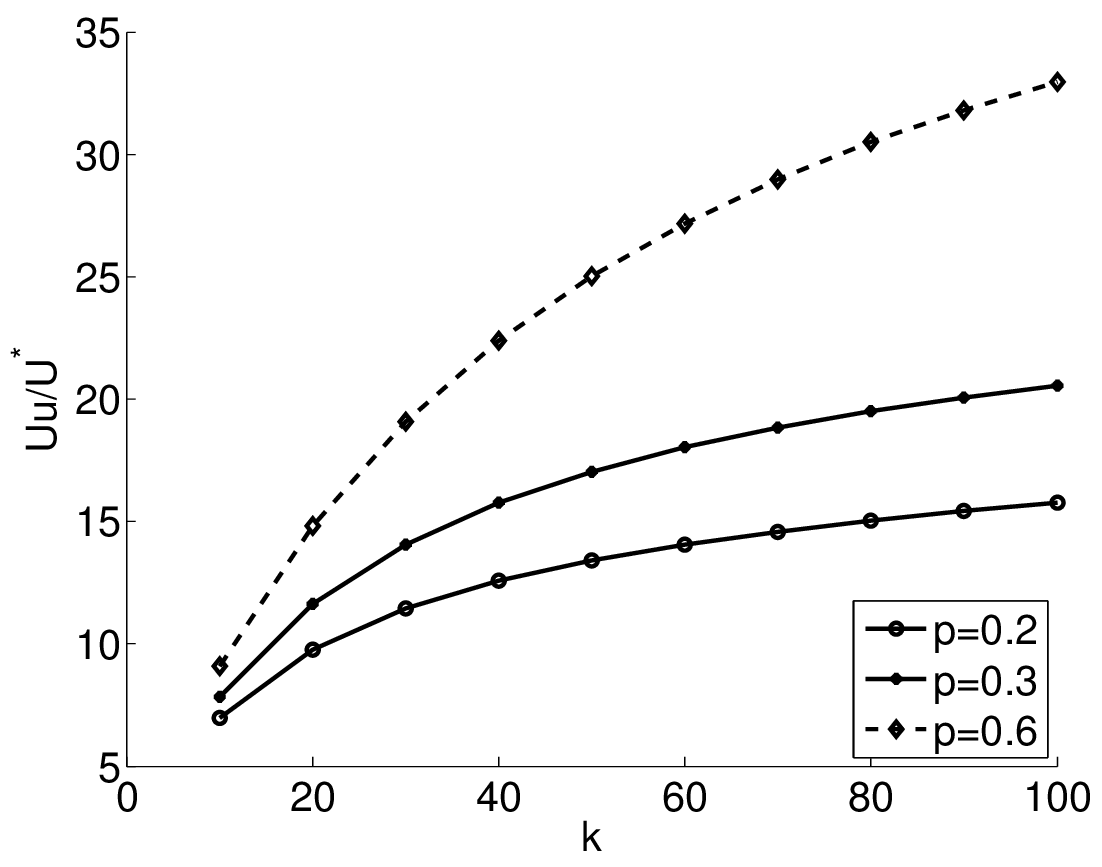}
\put(-195,145){a)}
\end{minipage}
\begin{minipage}{.47\textwidth}
\centering
\includegraphics[width=\textwidth]{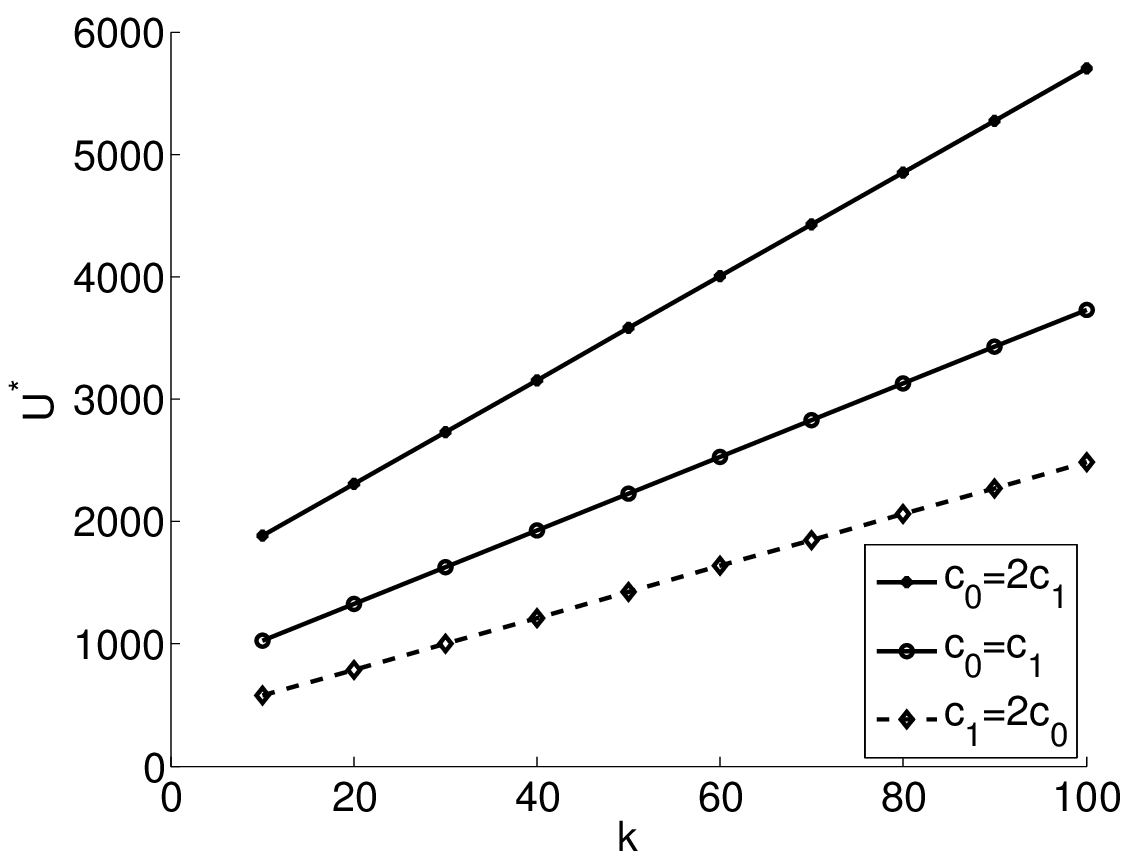}
\put(-195,145){b)}
\end{minipage}%
\caption{(a) \rev{Ratio $U_u/U^*$ for increasing size $k$ of the terminal communities of interconnected star networks with $m=m'=50$. The three curves refer to networks where the central nodes are connected
as \ER graphs generated for $p=0.2$, $p=0.3$ and $p=0.6$ respectively; $\beta_0=1$, $\beta_1=0.3$, $c_0=c_1=1$.}
  (b) Cost function $U^*$ for increasing dimension $k$ of the terminal communities, $\beta_0=1$, $\beta_1=0.3$. The curves refer to the case $p=0.3$ in the cases $c_0=2c_1$, $c_0=c_1$ and $2 c_0=c_1$, respectively.}\label{ratio}
\end{figure*}

In Tab.~\ref{table:perfhom} we compare the performance of \texttt{OptimalThreshold2D} with an SDP solver, namely the \texttt{SDPT3} solver \cite{SDPT3}.
The \texttt{SDPT3} solver generates a solution using a primal-dual interior-point algorithm which leverages on the infeasible path-following paradigm.
As reported in Tab.~\ref{table:perfhom}, when the solver is applied to Prob.~\ref{prob:twolevel}, we denote the corresponding solution as \texttt{SDPT3}~\texttt{(2D)}. For
the sake of comparison, we have reported also on the optimal solution derived with the same solver when curing rates are optimized per node (Prob ~\ref{prob:immun2}),
and we refer to this solution as \texttt{SDPT3}. The solution is provided on a graph with $m=m'=50$ and $c_0=c_1=1$, for increasing values of the terminal
community dimension $k$. We can observe that for the interconnected stars network, in the case of two infection rate levels, \texttt{SDPT3}, ~\texttt{SDPT3}~\texttt{(2D)}
and \texttt{Optimal\-Threshold2D} provide similar values. This result suggests that, in this particular case, there is no advantage to treat each node with different
curing policies: the general solution obtained using \texttt{SDPT3} has \fdpr{similar} performance as the one generated solving the $2D$ formulation of the problem,
i.e., by using \texttt{OptimalThreshold2D} or \texttt{SDPT3}~\texttt{(2D)} in the $2D$ case. We observe also that the curing rate of terminal nodes appears insensitive
to the increase of the terminal communities size $k$, as it can be seen, with a direct computation, in the examples \ref{subset:example} for the case with
$m=1$ and $m=2$, respectively. In Tab.~\ref{table:perfhet}, same performance evaluation has been reported for the same sample graph and \rev{the same cost coefficients}, but studying the case with more than two infection rate levels; specifically a central node can eventually infect each of its adjacent central
nodes with a different infection rate, also the infection rate between a central node and a terminal community can vary from a subgraph to another. The infection rates
are generated as uniform random variables \rev{in the interval $(0.1,1.9)$ and $(0.03,0.57)$ for the speed of infection between central communities and between a central node and a terminal community respectively.}

\begin{table*}
\caption{Performance of \texttt{OptimalThreshold2D}, \texttt{SDPT3~(2D)} and \texttt{SDPT3}. \rev{The graph considered is an interconnected star network with $m=m'=50$, where the connection between the central nodes} are represented by a \ER graph generated with $p=0.2$; $c_0=c_1=1$ and \rev{the values of the weights are $\beta_0=1$, and $\beta_1=0.3$.} \rev{In the \texttt{SDPT3} case, the values of $\delta_i^*$, $i=0,1$, represent the averaged value of the node-specific curing rates over a community.}}
\centering 
\begin{scriptsize}
\begin{tabular}{|p{0.1cm} | p{0.1cm}p{0.2cm}p{0.5cm} | p{0.1cm}p{0.2cm}p{0.5cm} | p{0.1cm}p{0.2cm}p{0.5cm} |}    
\hline\hline   
    & \multicolumn{3}{c|}{\texttt{OptimalThreshold2D}}  &  \multicolumn{3}{c|}{\texttt{SDPT3~(2D)}} &  \multicolumn{3}{c|}{\texttt{SDPT3}}\\ \hline
 k  & $U^*(10^3)$ & $\delta_0^*$ & $k \cdot \delta_1^*$ & $U^*(10^3)$ & $ \delta_0^*$ & $k \cdot \delta_1^*$ 
 & $U^*(10^3)$ & $\delta_0^*$ & $k \cdot \delta_1^*$  \\ \hline\hline
10  &  0.8057  &  13.116  &  0.29979  &  0.80572  &  13.1144  &  0.3  &  0.772  &  12.44  &  0.3\\ \hline
20  &  1.1057  &  16.1163  &  0.29984  &  1.1057  &  16.1144  &  0.3  &  1.072  &  15.44  &  0.3\\ \hline
30  &  1.4056  &  19.1222  &  0.29965  &  1.4057  &  19.1145  &  0.3  &  1.372  &  18.44  &  0.3\\ \hline
40  &  1.7055  &  22.1294  &  0.29951  &  1.7057  &  22.1144  &  0.3  &  1.672  &  21.44  &  0.3\\ \hline
50  &  2.0053  &  25.1354  &  0.29943  &  2.0057  &  25.1144  &  0.3  &  1.972  &  24.44  &  0.3\\ \hline
60  &  2.3052  &  28.1414  &  0.29936  &  2.3057  &  28.1144  &  0.3  &  2.272  &  27.44  &  0.3\\ \hline
70  &  2.6049  &  31.1578  &  0.29916  &  2.6057  &  31.1145  &  0.3  &  2.572  &  30.44  &  0.3\\ \hline
80  &  2.9047  &  34.1792  &  0.29893  &  2.9057  &  34.1144  &  0.3  &  2.872  &  33.4401  &  0.3\\ \hline
90  &  3.2043  &  37.1929  &  0.29882  &  3.2057  &  37.1144  &  0.3  &  3.172  &  36.4402  &  0.3\\ \hline
100  &  3.5039  &  40.1319  &  0.29947  &  3.5057  &  40.1144  &  0.3  &  3.472  &  39.44  &  0.3\\ \hline
\end{tabular}\label{table:perfhom}
\end{scriptsize}
\end{table*}
\begin{table*}
\caption{Performance of \texttt{OptimalThreshold2D}, \texttt{SDPT3~(2D)} and \texttt{SDPT3}, sample graphs are obtained from the same graph used in Tab.~\ref{table:perfhom} and $c_0=c_1=1$. The infection rates
are generated as uniform random variables in the interval $(0.1,1.9)$ and $(0.03,0.57)$, for rates between central communities and between a central node and a terminal community respectively.  \rev{In the \texttt{SDPT3} case, the values of $\delta_i^*$, $i=0,1$, represent the averaged value of the node-specific curing rates over a community.}}
\centering 
\begin{scriptsize}
\begin{tabular}{|p{0.1cm} | p{0.15cm}p{0.2cm}p{0.60cm} | p{0.15cm}p{0.2cm}p{0.60cm} | p{0.15cm}p{0.2cm}p{0.60cm} |}    
\hline\hline   
    & \multicolumn{3}{c|}{\texttt{OptimalThreshold2D}}  &  \multicolumn{3}{c|}{\texttt{SDPT3~(2D)}} &  \multicolumn{3}{c|}{\texttt{SDPT3}}\\ \hline
 k  & $U^*(10^3)$ & $\delta_0^*$ & $k \cdot \delta_1^*$ & $U^*(10^3)$ & $ \delta_0^*$ & $k \cdot \delta_1^*$ 
 & $U^*(10^3)$ & $\delta_0^*$ & $k \cdot \delta_1^*$  \\ \hline\hline
10  &  1.9963  &  34.1309  &  0.57945  &  1.9963  &  34.1309  &  0.57945  &  1.9379  &  33.4102  &  0.53481\\ \hline
20  &  2.5603  &  39.3993  &  0.5903  &  2.5603  &  39.3993  &  0.5903  &  2.4307  &  38.3384  &  0.51382\\ \hline
30  &  3.1665  &  44.9001  &  0.61431  &  3.1666  &  44.9001  &  0.61431  &  2.9536  &  43.5669  &  0.51682\\ \hline
40  &  3.7954  &  50.6431  &  0.63164  &  3.7957  &  50.6431  &  0.63164  &  3.4781  &  48.8125  &  0.51876\\ \hline
50  &  4.4332  &  56.4137  &  0.64499  &  4.4336  &  56.4137  &  0.64499  &  4.0279  &  54.3098  &  0.52495\\ \hline
60  &  5.1272  &  62.627  &  0.66528  &  5.1278  &  62.627  &  0.66528  &  4.6049  &  60.0804  &  0.53364\\ \hline
70  &  5.8175  &  69.0209  &  0.67612  &  5.8184  &  69.0209  &  0.67612  &  5.1595  &  65.6263  &  0.53663\\ \hline
80  &  6.4779  &  74.8633  &  0.68369  &  6.4792  &  74.8633  &  0.68369  &  5.6708  &  70.7388  &  0.53346\\ \hline
90  &  7.1277  &  80.4688  &  0.68984  &  7.1298  &  80.4688  &  0.68984  &  6.1686  &  75.717  &  0.5295\\ \hline
100  &  7.8361  &  87.1312  &  0.6959  &  7.839  &  87.1312  &  0.6959  &  6.7182  &  81.2132  &  0.53151\\ \hline
\end{tabular}
\label{table:perfhet}
\end{scriptsize}
\end{table*}
As seen there, by curing nodes with different curing policies it is possible to attain lower costs at larger values of 
$k$. This effect is depicted also in Fig.~\ref{fig:perf}. In particular, again, the $2D$ curing rates output 
of \texttt{SDPT3~(2D)} and \texttt{OptimalThreshold2D} show similar performance and the optimal curing rate 
of terminal communities appears insensitive to the increase of the terminal communities size. \fdpr{We observe that 
  the relative advantage of the \texttt{SDPT3} tends to increase with the size of the terminal communities.}
\begin{figure}[t]
\centering
\includegraphics[width=0.45\textwidth]{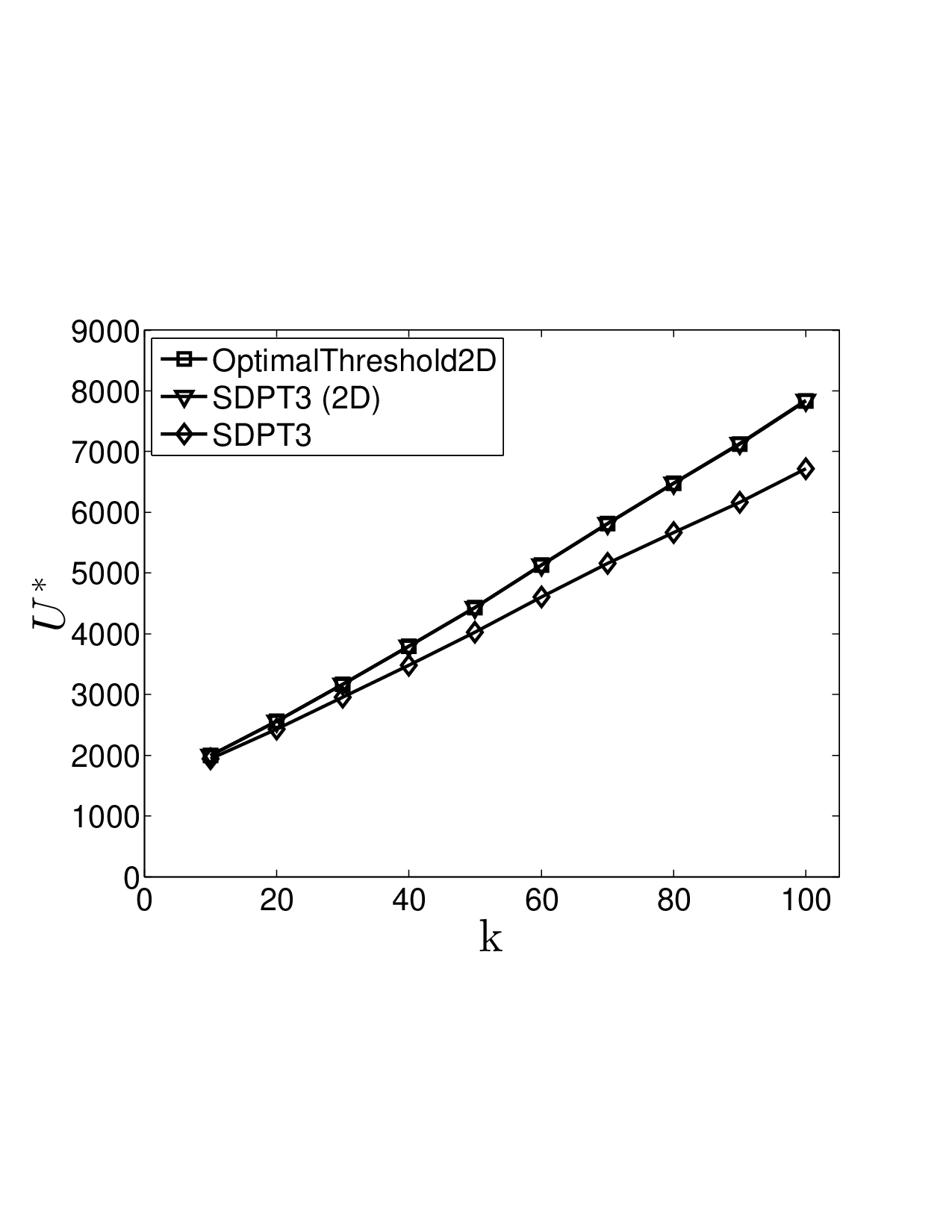}
\caption{Details of the costs in the case of uniform distribution of infection rates (see \rev{Tab.~\ref{table:perfhet})}.}\label{fig:perf}
\end{figure}

\fdpr{In the final set of experiments we study the case of a complete bipartite graph.} \rev{We consider that the community whose curing rate is $\delta_0$, to which we refer to as the  central community, has a fixed dimension $k_0=50$; instead, for the so-called terminal community, with $\delta_1$ curing rate, we consider increasing size $k_1=1, 50, 100, 150, 200$}. \rev{In Fig.~\ref{ratioBip}a) we report on the
  ratio between the cost obtained by using the uniform curing policy, namely $U_u$, and the optimal cost $U^*$ obtained by means of the~\texttt{OptimalThreshold2D}, in the case of equal coefficients $c_0=c_1=1$}.
	\rev{As expected, we can see that when $k_1=1$ we obtain an advantage in the use of the two-level
  curing strategy. Clearly, when the two communities have the same size there is no difference between the two costs; the ratio starts to grow again as the asymmetry in terms of communities dimensions, starts to increase again.} 
	
	\rev{In Fig.~\ref{ratioBip}b), we compare the effect of having two different cost coefficients, precisely $c_1=4 c_0$, with the case where they are the same, namely $c_0=c_1=1$, considering that their total amount is given and fixed a priori, that is $c_0+c_1=2$.
	A bias in the
  cost coefficient towards terminal community has a beneficial consequence because the optimal cost is lower than the case of equal coefficients, the advantage increases the larger the size of the terminal community. However, it is interesting to note that we would have obtained the same optimal cost if we interchanged the two coefficients, namely $c_0=4c_1$ (see example \ref{subset:example}).
	Basically, due to the specific topology of the network, we have an advantage in considering different cost coefficients for the two communities, instead of having them equal, if their total sum is fixed. }
	
\begin{figure*}[t]	
\centering 
\begin{minipage}{.47\textwidth}
\centering
\includegraphics[width=\textwidth]{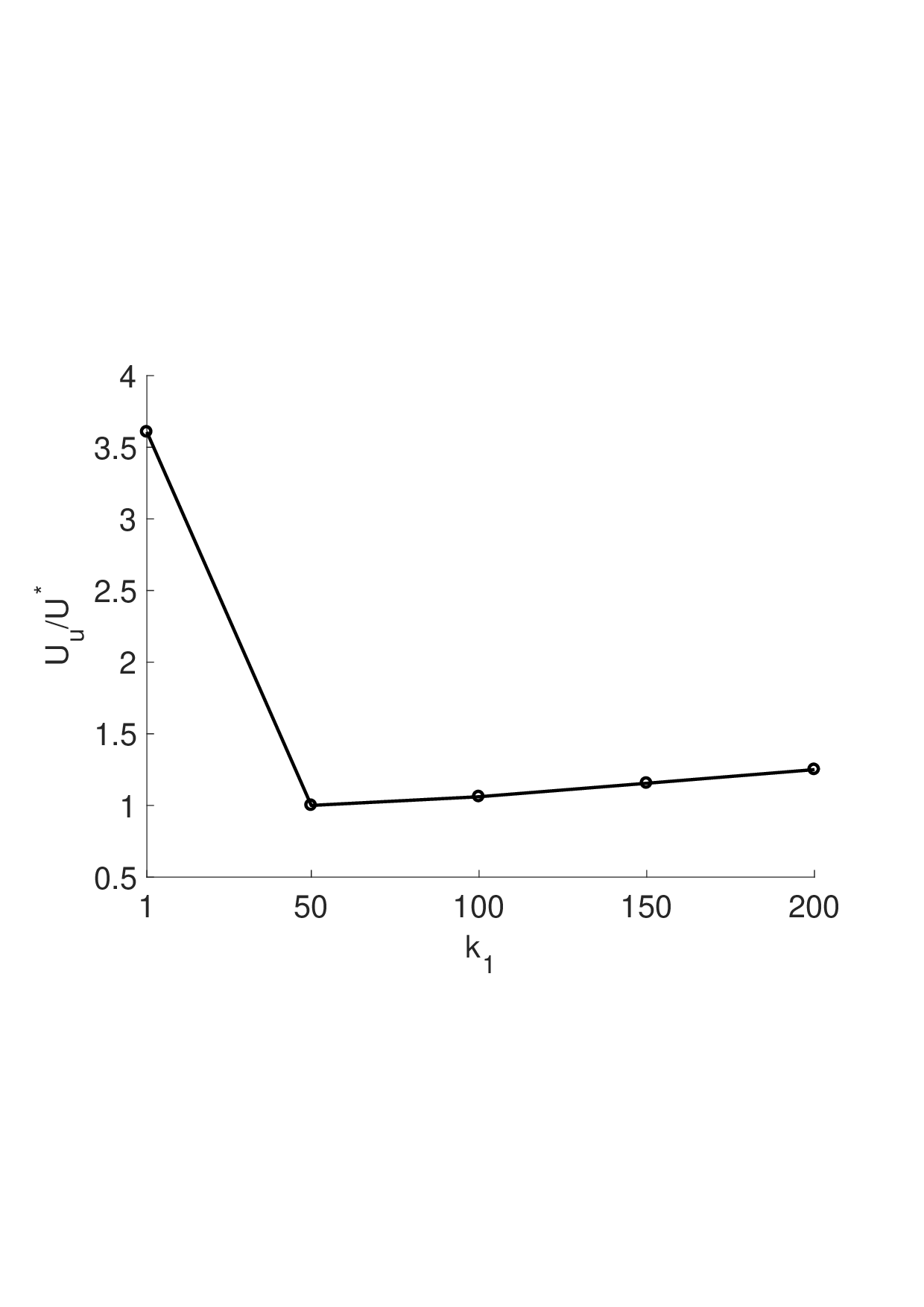}
\put(-192,143){a)}
\end{minipage}
\begin{minipage}{.47\textwidth}
\centering
\includegraphics[width=\textwidth]{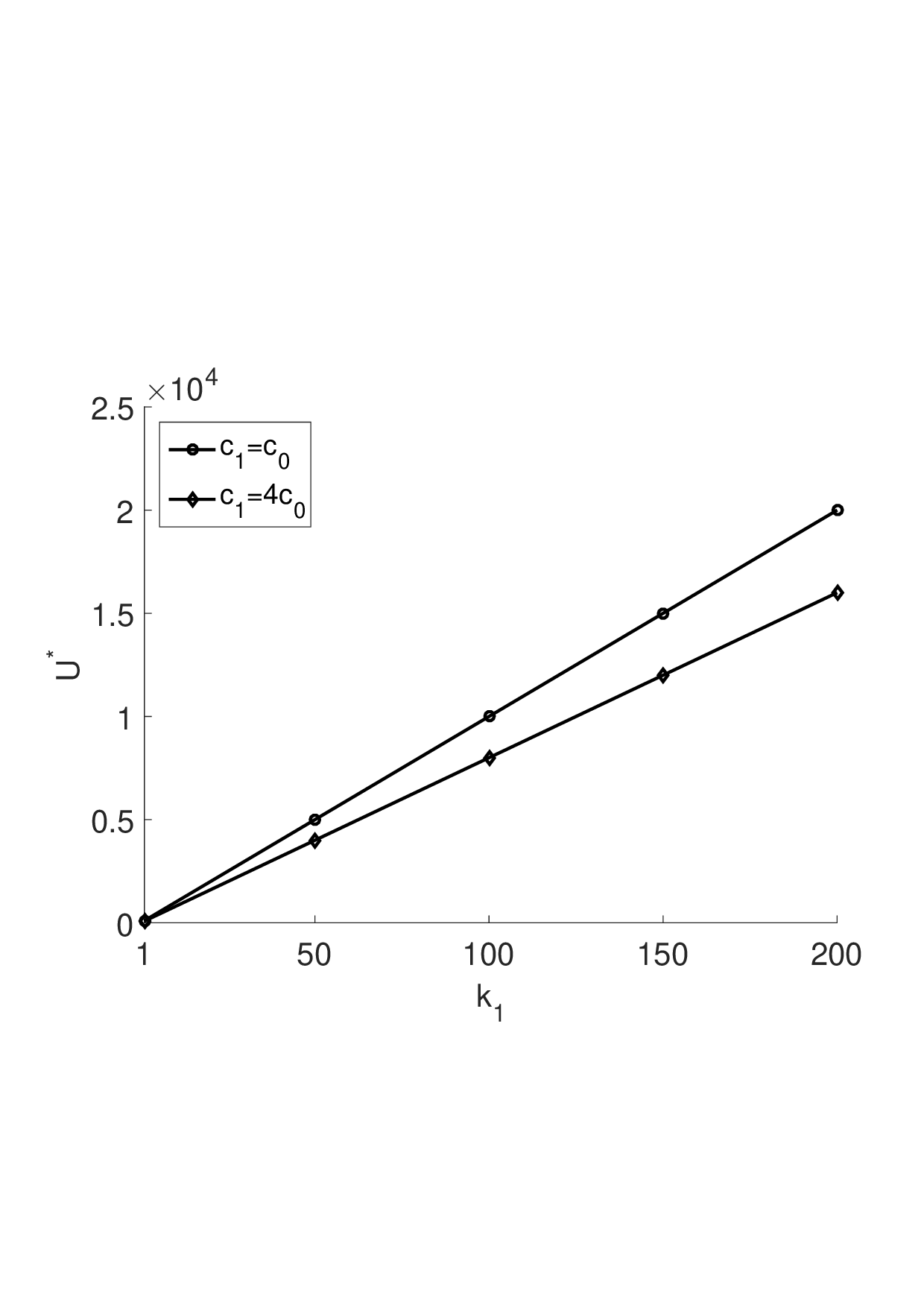}
\put(-194,143){b)}
\end{minipage}%
\caption{(a) \rev{Ratio $U_u/U^*$ in the case of complete bipartite graphs 
 for increasing size $k_1=1, 50, 100, 150, 200$ of the terminal community, and fixed size $k_0=50$ of the central community: $\beta=1$, $c_0=c_1=1$.
  (b) Cost function $U^*$ for increasing dimension $k_1$ of the terminal community and fixed size $k_0=50$, $\beta=1$. The two curves refer to the  cases $c_0=c_1=1$ and $c_1=4c_0$, respectively, in such a way that $c_0+c_1=2$.}}\label{ratioBip}
\end{figure*}


\section{Conclusions}\label{sec:concl}


\stef{We have studied the problem of finding a non-uniform allocation of curing resources, within a networked population, at the minimum cost possible to avoid the epidemic from persisting indefinitely in the network.
We have considered a mean-field approximation of an SIS model to study the diffusion of epidemics over a directed weighted graphs, capturing the possible asymmetric interactions, and the heterogeneity in the contagiousness.
We have reported on the necessary and sufficient
conditions for the extinction of the epidemics. These conditions are related to the sign of the stability modulus
of a matrix encoding for the network structure
and for the parameters of the model. Thus, such stability modulus represents the epidemic threshold of our system.
Consequently, we have formulated a convex optimization problem for determining a cost-optimal curing policy, via a semidefinite programming approach, involving, the spectral properties of the network. }

\stef{Then, we have specialized the theory to the case of equitable partitions,
in order to model heterogeneous community networks that possess a certain degree of regularity in their connectivity; this choice has been motivated since communities are
relevant non-trivial topological feature of complex networks, that often have a certain regularity in their structure. 
Thus, in this case, we were able to reduce the dimensionality of our optimization problem, 
that is useful since the size of many real-networks poses limitations in investigating their spectral properties. }

\stef{At last, we have discussed on the special case of a two-dimensional curing policy, that can
reflect, e.g., the case of different policy decisions for two different kinds of individual units
(male and female, younger and elders, small villages and cities, firewall/gateways or clients in an
enterprise network, etc.,...). With respect to this problem we have proposed
 an $\epsilon$-approximation algorithm with polynomial complexity in the input size. 
} 


\textbf{Fundings.} This work has been partially supported by the European Commission within the framework of the CONGAS project FP7-ICT-2011-8-317672 (see \url{http://www.congas-project.eu}

\section{Appendix}\label{app}
\subsection{Dimensionality reduction of the dynamical system \eqref{het}}\label{app:dim}

Let us define $q^T_{jm}$ as the element of $Q^T$ in position $(i,j)$. We know that %
\[
q^T_{jm}= \frac{k_j c^{out}_{jm}}{\sqrt{k_j k_m}},
\] hence 
\[
c^{out}_{jm}= \frac{\sqrt{k_j k_m}}{k_j} q^T_{jm}=\left(\frac{k_m}{k_j}\right)^{1/2} q^T_{jm}.
\]
Thus we can write \eqref{eqred} in the following matrix form
\begin{equation}\label{hetQ12}
\frac{d \overline{P}(t)}{dt}=\left( \tilde{Q}-\overline{D}\right)\overline{P}(t)-\diag(\overline{P}(t))\tilde{Q}\overline{P}(t),
\end{equation}
where  $\widetilde Q= \operatorname{diag}\left(\frac{1}{\sqrt{k_j}}\right) Q^T\operatorname{diag} (\sqrt{k_j})$.  
It is immediate that $\sigma(Q^T)= \sigma(\tilde{Q})$. 

By \cite[Corollary 4.2]{QEP}, irrespective of the initial conditions of nodes in the same community, 
it is sufficient to compute the positive steady-state vector $\overline{P}_{\infty}$ of the reduced system \eqref{hetQ12} to obtain that 
of the original system \eqref{het}. Indeed, as time elapses all nodes in the same community tend to have the same infection probabilities, thus the components of the steady-state vector $P_{\infty}$ corresponding to nodes in the same 
 community are equal.

\subsection{Proof of Lemma \ref{lemma1}}
\begin{proof}
\noindent i) We first prove that \stef{$A^TS^T=S^TQ^T$}. In fact, if $i \in V_h$, 
it holds
\begin{eqnarray}
(A^TS^T)_{i,j}&=& \frac{c^{out}_{hj}}{\sqrt{k_j}}, \\
(S^TQ^T)_{i,j}&=& \frac{1}{\sqrt{k_h}}q^T_{hj}= \frac{c^{out}_{hj}}{\sqrt{k_j}}. 
\end{eqnarray}

We further note that $(DS^T)_{ih}= (S^T\overline{D})_{ih}= \frac{1}{\sqrt{k_h}}\delta_h$, if $i \in V_h$, otherwise $(DS^T)_{ih}=0$. 
Thus the statement holds.

\noindent ii) By using the result in $i)$, the proof in \cite[Thm. 2.2]{godsil} applies.
\end{proof}

\subsection{Proof of proposition \ref{propthr}}

We first need some technical facts to prove Prop. \ref{propthr}.

\begin{proposition}\label{proprieties1}
Let $A$ be an $n \times n$ irreducible and non-negative matrix  and let $D=\diag(\delta_1, ...,\delta_n)$. Then it holds:\\
\noindent i. $A-D$ is irreducible, for each $(\delta_1, ...,\delta_n)$.\\
\noindent ii. There exists an eigenvector $w$ of $A-D$ such that $w > 0$ (i.e. each component $w_i>0$, $i=1, \dots, n$) and the corresponding eigenvalue is $r(A-D)$, for each $(\delta_1, ...,\delta_n)$.

\end{proposition}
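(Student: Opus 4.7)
The plan is to treat the two parts separately, with part (i) a short combinatorial remark and part (ii) a standard shift-and-apply-Perron--Frobenius reduction.

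For part (i), I would recall that irreducibility of an $n \times n$ matrix $M$ is equivalent to strong connectivity of the directed graph $\mathcal{G}(M)$ whose arcs are the ordered pairs $(i,j)$ with $i \neq j$ and $M_{ij} \neq 0$. Since $D$ is diagonal, $A$ and $A-D$ share exactly the same off-diagonal pattern, so $\mathcal{G}(A) = \mathcal{G}(A-D)$. Irreducibility of $A$ therefore transfers to $A-D$ directly, independent of the choice of $(\delta_1,\ldots,\delta_n)$.

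For part (ii), the plan is to pick a scalar $c \in \mathbb{R}$ large enough so that the shifted matrix $M_c := A - D + cI$ satisfies $M_c \geq 0$ entrywise; any $c \geq \max_i \delta_i$ works, since the off-diagonal entries of $A$ are already non-negative. By part (i) applied to the diagonal $D - cI$, $M_c$ is also irreducible. The Perron--Frobenius theorem then applies: the spectral radius $\rho := \rho(M_c)$ is a simple, real, positive eigenvalue of $M_c$, with a strictly positive eigenvector $w > 0$, unique up to scaling. Since $\sigma(M_c) = \{\mu + c : \mu \in \sigma(A-D)\}$ with eigenvectors preserved under the rigid shift, $w$ is an eigenvector of $A-D$ associated with the real eigenvalue $\lambda^\star := \rho - c$. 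To identify $\lambda^\star$ with $r(A-D)$, for any $\mu \in \sigma(A-D)$ I would use the bound $|\mu + c| \leq \rho$, from which
\[
\mathrm{Re}(\mu) + c \leq |\mu + c| \leq \rho, \qquad \text{i.e.,} \qquad \mathrm{Re}(\mu) \leq \lambda^\star .
\]
Taking the maximum over $\mu \in \sigma(A-D)$ and noting that $\lambda^\star$ itself is achieved by a real eigenvalue of $A-D$ yields $r(A-D) = \lambda^\star$, with associated strictly positive eigenvector $w$.

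No serious obstacle is expected here; the argument is essentially a bookkeeping exercise with the shift. The only points requiring a modicum of care are (a) checking that adding $cI$ preserves irreducibility, which reduces to the observation that $cI$ does not alter the off-diagonal pattern and hence follows from part (i), and (b) recognising that although $A-D$ is itself not non-negative (so Perron--Frobenius does not apply to it directly), all spectral information needed is inherited from $M_c$ via the affine shift $\mu \mapsto \mu + c$ acting on eigenvalues with eigenvectors left unchanged.
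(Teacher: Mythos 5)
Your proposal is correct. For part (i) you argue exactly as the paper does: irreducibility depends only on the off-diagonal zero pattern (you phrase it via strong connectivity of the associated digraph, the paper via the subset definition from its reference \cite{Stab}), and subtracting a diagonal matrix leaves that pattern untouched. For part (ii) the two routes genuinely diverge in presentation: the paper simply cites \cite[Lemma~4.2]{Stab} and gives no argument, whereas you supply a complete, self-contained proof via the standard shift $M_c = A - D + cI$ with $c \geq \max_i \delta_i$, Perron--Frobenius applied to the non-negative irreducible matrix $M_c$, and the identification $r(A-D) = \rho(M_c) - c$ through the chain $\mathrm{Re}(\mu) + c \leq |\mu + c| \leq \rho(M_c)$. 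All steps check out, including the entrywise non-negativity of $M_c$ (the diagonal entries become $a_{ii} - \delta_i + c \geq a_{ii} \geq 0$) and the preservation of eigenvectors under the rigid spectral shift. It is worth noting that your argument is essentially the same device the paper itself deploys later, inside the proof of Proposition~\ref{propthr}, where the matrices $\hat{A^T}$ and $\hat{Q^T}$ are formed by adding $cI$ precisely so that Perron--Frobenius applies; so your write-up makes explicit, at the level of this proposition, the machinery the paper defers to an external lemma. What your version buys is self-containedness; what the paper's version buys is brevity.
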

\begin{proof}
\noindent i. From~\cite{Stab}: a $n \times n$ matrix $A$ is said to be irreducible if for any proper subset $S \subseteq \left\{1, \ldots, n\right\}$ there exists $i \in S$ and $j \in S'=\left\{1, \ldots, n\right\}-S$ such that $a_{ij} \neq 0$; since $A$ is irreducible, the definition applies immediately to $A-D$;\\
\noindent ii. See  \cite[Lemma~4.2]{Stab}.
\end{proof}

With these background statements we prove Prop. \ref{propthr}.
\begin{proof}
Basically, by Theorem \ref{threshhet}, we have to show that
\begin{equation}\label{eigA}
 r(A^T - D)=r(\tilde{Q}-\overline D)=r(Q^T- \overline D).
\end{equation}
We first prove that
\begin{equation}\label{eqeigA}
r( Q^T-\overline D)=r( A^T - D).
\end{equation}

Let $c \in \mathbb{R}$ such that both $a^T_{zz}- \delta_z+ c \geq 0$, for all $z=1, \ldots, N$ and $q^T_{ii} -\overline \delta_i +c\geq 0$  for all $i=1 \ldots, n$. 
Let us define $A^T-D^T+cI_{N \times N}=\hat{A^T}$ and $Q^T- \overline{D} + cI_{n \times n}=\hat{ Q^T}$.
 $\hat{A^T}$ and $\hat{Q^T}$ are non negative and irreducible matrices (see $i)$ in Proposition \ref{proprieties1}).
We order the eigenvalues of $\hat{ Q^T}$ so that $|\lambda_1(\hat{ Q^T})|\geq |\lambda_2(\hat{ Q^T})| \geq \ldots \geq|\lambda_n(\hat{ Q^T})|$, and similarly for $\hat{ A^T}$.
By the Perron-Frobenius theorem \stef{\cite[Chapter 8]{horn}}, the eigenvalue of maximum modulus of an irreducible and non negative matrix is
 real and positive and its corresponding eigenvector, the Perron vector, is
 the unique (up to a factor) strictly positive eigenvector of the matrix.
Hence there exists an eigenvector $w > 0$ of $\hat{ Q^T}$ corresponding to $\lambda_1(\hat{ Q^T})$, i.e. $w_i>0$, for all $i=1, ..., n$.
Now, by $ii)$ in Lemma \ref{lemma1} and since $S^T I_{n \times n}=I_{N \times N} S^T$, we have that
$S^T w>0$ is the eigenvector of $\hat{ A^T}$ corresponding to $\lambda_1(\hat{ Q^T})$. However, because $S^T w$ is strictly positive, it must be the Perron vector of $\hat{ A^T}$, consequently $\lambda_1(\hat{ A^T})=\lambda_1(\hat{ Q^T})$.
Since $$r(\hat{ Q^T})= \lambda_1(\hat{ Q^T})= \lambda_1(\hat{ A^T})=r(\hat{ A^T}),$$ and 
 \begin{equation}\label{real}
r(Q^T-\overline D)+c=r(\hat{ Q^T})=r(\hat{ A^T})=r(A^T-D) + c,
\end{equation}
 \eqref{eqeigA} is proved.  
Now we prove that 
\begin{equation}\label{eqeigQ}
r(\tilde{Q}-\overline D)=r(Q^T-\overline D).
\end{equation}
Let the matrix $\Lambda=\diag(k_i)$, $i=1, \ldots, n$. For any $n$-dimensional vector $v$ and scalar $\lambda \in \mathbb{C}$ we have that
$$ \left(\tilde{Q} - \overline D\right)v = \lambda v \Leftrightarrow \left(  \Lambda^{-\frac12}Q \Lambda^{\frac12}-\overline D\right)v=\lambda v \Leftrightarrow $$
$$\left(Q \Lambda^{\frac12} - \overline D \Lambda^{\frac12} \right)v = \lambda \Lambda^{\frac12}v\Leftrightarrow $$
$$\left(Q -\overline D\right) \left(\Lambda^{\frac12} v\right)
=\lambda \left(\Lambda^{\frac12}v\right),$$
hence $\lambda \in \sigma(\tilde{Q} - \overline D) \Longleftrightarrow  \lambda \in \sigma( Q - \overline D)$, so that \eqref{eqeigQ} is verified.
In conclusion, from \eqref{eqeigQ} and \eqref{eqeigA} it follows \eqref{eigA}.
\end{proof}

\subsection{Proof of proposition \ref{propr2}}

\begin{proof}
\fdpr{\noindent i. Let $\delta_i=0$ for some $i=1,\ldots n$ and assume that $\lambda_1(A-D) < 0$: for the vector ${\mathbf e}_i$ of the canonical basis it holds ${\mathbf e}_i^T \rev{(A - D)}{\mathbf e}_i={\mathbf e}_i^T A {\mathbf e}_i  \geq 0$ which is a contradiction.}\\
\fdpr{\noindent ii. The eigenvalues of \rev{such kind of} matrix vary with continuity with the entries of the matrix \cite[Appendix D]{horn}.}\\
\fdpr{iii. Let us consider $c>0$ such that $-d_{ii}+c\geq 0$ for all $i=1,\ldots,n$. \rev{Then, $A-D+cI$ is non-negative irreducible and $\lambda_1(A-D+c\cdot I_n)$ is actually its Perron-Frobenius eigenvalue \cite[Chapter 8]{horn}}.
Now, we can write for any $\epsilon>0$
\[
\lambda_1(A-D+\epsilon\, \rev{\diag({\mathbf e_i})})=\lambda_1(A-D+\epsilon \,  \rev{\diag({\mathbf e_i})}+c\, I_n)-c > \lambda_1(A-D +c\,I_n)-c=\lambda_1(A-D),
\]}
\rev{where the strict majorization holds because the Perron-Frobenius eigenvalue is strictly increasing in any entry of the matrix \cite{varga2009matrix,horn}}. 

\end{proof}

\subsection{Examples} \label{subset:example}

\noindent 1. A simple example of the optimal solution for the case of a community network 
is that of a star graph, where we have two communities, one formed by the central node and the 
other by the leaf nodes. 
Assuming that the infection rate is $\beta$, we have to find the value of 
$\delta_0$ and $\delta_1$ for which $ \beta Q- D$ has the maximal eigenvalue which is equal 
to zero. The characteristic polynomial of $\beta Q- D$ for a star graph with $k$ leaves is
\[
p_{\lambda}(\beta Q- D)=\lambda^2 + (\delta_0 + \delta_1)\lambda +\delta_0\delta_1 - \beta^2 k.
\]
We observe that $\lambda=0$ belongs to the spectrum of $\beta Q- D$ if and only if $\delta_0=\beta^2  k/\delta_1$.
 This also ensures that the second eigenvalue is negative and, consequently, $\lambda=0$ must be the largest eigenvalue  
of $\beta Q- D$. Thus replacing the value of $\delta_0$ obtained above in

$$U(\delta_0,\delta_1)= c_0 \delta_0 + c_1 k \delta_1,$$

and setting to zero the following derivative

$$U'(\delta_1)=-\frac{c_0  \beta^2 k^3}{\delta_1^2}+ c_1 k,$$

we have that the linear cost optimization is solved for
\begin{equation}\label{eq:immtwolev}
\delta_0 = \beta k \sqrt{\frac{c_1}{c_0}},  \quad    \delta_1= \beta \sqrt{\frac{c_0 }{c_1}},
\end{equation}
which in turn provides the optimal cost 
\begin{equation*}\label{eq:starcost}
U^*=\beta k \Big ( c_0\sqrt{\frac{c_1}{c_0}} + c_1\sqrt{\frac{c_0}{c_1}}\Big )= 2 \beta k \left(\sqrt{c_1 c_0}\right). 
\end{equation*}
We observe that the optimal cost is linear in the terminal community size $k$.

In the case of a uniform curing policy, where all nodes are cured at rate $\delta$, we have that 
the value of $\delta$ such that $\lambda=0$ is the largest eigenvalue of $\beta Q - D$ is equal to $\beta \sqrt{k}$, thus the 
value $U_u$ of the total cost is 
\[
U_u = \beta \sqrt{k} (c_0 + c_1 k). 
\]

It is easy to see that the ratio $U_u/U^*$ is increasing in $(1, \infty)$, moreover we can observe that
\begin{equation}\label{eq:ratio}
\frac{U_u}{U^*}  =  O(\sqrt k).
\end{equation}
Hence it is clear that we have an advantage in considering a two-level curing policy, with respect to the uniform 
curing policy.  

\noindent 2. Now we consider an interconnected star network with two linked central nodes, where each terminal community has the same number of elements $k$. We set $\beta$ as the infection rate between the central nodes and $\eps \beta$ the infection rate between a central node and a node in its adjacent terminal community, where $\eps > 0$.
After computing the characteristic polynomial of 
$\beta Q- D$ we can see that the zero eigenvalue belongs to the spectrum of $\beta Q- D$  provided that $$\delta^2_0 \delta^2_1 - 2 \beta^2 \delta_0 \delta_1 \eps^2 k + \eps^4 \beta^4 k^2- \beta^2 \delta_1^2 =0.$$ 
The values of $\delta_0$ for which $\lambda=0$ corresponds to the largest eigenvalue of $\beta Q- D$  is equal to $\delta_0 = \frac{\beta^2 \eps^2 k}{\delta_1} +\beta$ and the linear cost optimization is solved for
\[
\delta_0 =  \beta \left(\eps k \sqrt{\frac{c_1}{c_0}}+1\right), \qquad \delta_1=\eps \beta \sqrt{\frac{c_0}{c_1}}.
\]
Consequently the optimal cost is
\begin{equation}\label{twostarcost}
U^*= 2 \beta c_0 \left(\eps k \sqrt{\frac{c_1}{c_0}} + 1 \right) + 2 c_1 k \sqrt{\frac{c_0}{c_1}} \eps \beta = 2 \beta \sqrt{c_0c_1}(\eps (k+1) + c_0).
\end{equation}

In the case of a uniform curing policy we have that the value of $\delta$ such that $\lambda=0$ is the largest eigenvalue is $(\beta+\sqrt{\beta^2 + 4 \beta^2 \eps^2 k})/2$ and the value of the total cost is
\[
 U_u=\hspace{-0.2 em}c_0\hspace{-0.2 em}\left(\hspace{-0.2 em}\beta\hspace{-0.2 em} +\hspace{-0.2 em} \sqrt{\beta^2\hspace{-0.2 em} +\hspace{-0.2 em} 4\beta^2 \eps^2 k}\hspace{-0.2 em}\right)\hspace{-0.2 em} +\hspace{-0.2 em} %
c_1 k \left(\hspace{-0.2 em} \beta\hspace{-0.2 em} +\hspace{-0.2 em} \sqrt{\beta^2\hspace{-0.2 em} +\hspace{-0.2 em} 4\beta^2 \eps^2 k}\hspace{-0.2 em}\right).
\]
 
The ratio $U_u/U^*$ is increasing in $(0,\infty)$,
and again we have that
\[
\frac{U_u}{U^*} = O(\sqrt k).
\]

\noindent 3. Now we consider a complete bipartite graph. Basically, we have two communities and we denote by 
$k_0$ the number of elements in the community whose nodes have recovery rate $\delta_0$ and $k_1$ the 
number of elements in the community whose nodes has recovery rate $\delta_1$. The optimal 
curing rates are
\[
\delta_0= \beta k_1 \sqrt{\frac{c_1}{c_0}},  \qquad \delta_1= \beta k_0 \sqrt{\frac{c_0}{c_1}},
\]
and the optimal cost is
\[
U^*= c_0 k_0 \beta k_1 \sqrt{\frac{c_1}{c_0}} + c_1 k_1 \beta k_0 \sqrt{\frac{c_0}{c_1}}.
\]
In the case of a uniform curing policy the value of $\delta$ such that $\lambda=0$ is the largest eigenvalue is
\[
\delta= \beta \sqrt{k_0 k_1},
\]
and the cost is
\[
U_u= c_0 k_0 \beta \sqrt{k_0 k_1}+ c_1 k_1 \beta \sqrt{k_0 k_1}.
\]

In this case the asymptotic behavior of $U_U/U^*$ for high values of $k_0$ and $k_1$ 
depends on the direction in which we move, thus we can not say anything in this regard.

\flushend
%
\bibliographystyle{unsrt}

\begin{thebibliography}{10}

\bibitem{PietSurvey}
Romualdo Pastor-Satorras, Claudio Castellano, Piet Van~Mieghem, and Alessandro
  Vespignani.
\newblock Epidemic processes in complex networks.
\newblock {\em Reviews of modern physics}, 87(3):925, 2015.

\bibitem{wan2008designing}
Yan Wan, Sandip Roy, and Ali Saberi.
\newblock Designing spatially heterogeneous strategies for control of virus
  spread.
\newblock {\em Systems Biology, IET}, 2(4):184--201, 2008.

\bibitem{prakash2013fractional}
B~Aditya Prakash, Lada Adamic, TJ~Iwashyna, Hanghang Tong, and Christos
  Faloutsos.
\newblock Fractional immunization in networks.
\newblock {\em Austin, Texas, USA}, 2013.

\bibitem{borgs2010distribute}
Christian Borgs, Jennifer Chayes, Ayalvadi Ganesh, and Amin Saberi.
\newblock How to distribute antidote to control epidemics.
\newblock {\em Random Structures \& Algorithms}, 37(2):204--222, 2010.

\bibitem{gourdin2011optimization}
Eric Gourdin, Jasmina Omic, and Piet Van~Mieghem.
\newblock Optimization of network protection against virus spread.
\newblock In {\em Design of Reliable Communication Networks (DRCN), 2011 8th
  International Workshop on the}, pages 86--93. IEEE, 2011.

\bibitem{sahneh2012optimal}
Faryad~Darabi Sahneh and Caterina~M Scoglio.
\newblock Optimal information dissemination in epidemic networks.
\newblock In {\em Decision and Control (CDC), 2012 IEEE 51st Annual Conference
  on}, pages 1657--1662. IEEE, 2012.

\bibitem{VanMieghem2009}
P.~Van~Mieghem, J.~Omic, and R.~Kooij.
\newblock Virus spread in networks.
\newblock {\em Networking, IEEE/ACM Tran. on}, 17(1):1--14, Feb 2009.

\bibitem{Pappas}
V.~M. Preciado, M.~Zargham, C.~Enyioha, A.~Jadbabaie, and G.~Pappas.
\newblock Optimal vaccine allocation to control epidemic outbreaks in arbitrary
  networks.
\newblock {\em CoRR}, abs/1303.3984, 2013.

\bibitem{preciado2014optimal}
Victor~M Preciado, Michael Zargham, Chinwendu Enyioha, Ali Jadbabaie, and
  George~J Pappas.
\newblock Optimal resource allocation for network protection against spreading
  processes.
\newblock {\em Control of Network Systems, IEEE Transactions on}, 1(1):99--108,
  2014.

\bibitem{enyioha2015distributed}
Chinwendu Enyioha, Ali Jadbabaie, Victor Preciado, and George~J Pappas.
\newblock Distributed resource allocation for epidemic control.
\newblock {\em arXiv preprint arXiv:1501.01701}, 2015.

\bibitem{drakopoulos2015network}
Kimon Drakopoulos, Asuman Ozdaglar, and John~N Tsitsiklis.
\newblock When is a network epidemic hard to eliminate?
\newblock {\em arXiv preprint arXiv:1510.06054}, 2015.

\bibitem{drakopoulos2015lower}
Kimon Drakopoulos, Asuman Ozdaglar, and John~N Tsitsiklis.
\newblock A lower bound on the performance of dynamic curing policies for
  epidemics on graphs.
\newblock {\em arXiv preprint arXiv:1510.06055}, 2015.

\bibitem{boccaletti2006}
Stefano Boccaletti, Vito Latora, Yamir Moreno, Martin Chavez, and D-U Hwang.
\newblock Complex networks: Structure and dynamics.
\newblock {\em Physics reports}, 424(4):175--308, 2006.

\bibitem{Schwenk}
Allen~J Schwenk.
\newblock Computing the characteristic polynomial of a graph.
\newblock In {\em Graphs and Combinatorics}, pages 153--172. Springer, 1974.

\bibitem{godsil}
C.~Godsil.
\newblock Feasibility conditions for the existence of walk-regular graphs.
\newblock {\em Linear Algebra and its Applications}, 30:15--61, 1980.

\bibitem{EvolDelio}
Delio Mugnolo.
\newblock {\em Semigroup methods for evolution equations on networks}.
\newblock Springer, 2014.

\bibitem{watts2005multiscale}
Duncan~J Watts, Roby Muhamad, Daniel~C Medina, and Peter~S Dodds.
\newblock Multiscale, resurgent epidemics in a hierarchical metapopulation
  model.
\newblock {\em Proceedings of the National Academy of Sciences of the United
  States of America}, 102(32):11157--11162, 2005.

\bibitem{may1984spatial}
Robert~M May and Roy~M Anderson.
\newblock Spatial heterogeneity and the design of immunization programs.
\newblock {\em Mathematical Biosciences}, 72(1):83--111, 1984.

\bibitem{riley2003transmission}
Steven Riley, Christophe Fraser, Christl~A Donnelly, Azra~C Ghani, Laith~J
  Abu-Raddad, Anthony~J Hedley, Gabriel~M Leung, Lai-Ming Ho, Tai-Hing Lam,
  Thuan~Q Thach, et~al.
\newblock Transmission dynamics of the etiological agent of sars in hong kong:
  impact of public health interventions.
\newblock {\em Science}, 300(5627):1961--1966, 2003.

\bibitem{QEP}
Stefano Bonaccorsi, Stefania Ottaviano, Delio Mugnolo, and Francesco {De
  Pellegrini}.
\newblock Epidemic outbreaks in networks with equitable or almost-equitable
  partitions.
\newblock {\em SIAM Journal of Applied Mathematics}, 75(6):2421 -- 2443, 2015.

\bibitem{scoglio}
Faryad~Darabi Sahneh, Caterina Scoglio, and Piet Van~Mieghem.
\newblock Generalized epidemic mean-field model for spreading processes over
  multilayer complex networks.
\newblock {\em Networking, IEEE/ACM Transactions on}, 21(5):1609--1620, 2013.

\bibitem{VanMieghem2013}
Piet Van~Mieghem.
\newblock Decay towards the overall-healthy state in sis epidemics on networks.
\newblock {\em arXiv preprint arXiv:1310.3980}, 2013.

\bibitem{Draief2010}
Moez Draief and Laurent Massouli.
\newblock {\em Epidemics and rumours in complex networks}.
\newblock Cambridge University Press, 2010.

\bibitem{van2016approximate}
P~Van~Mieghem.
\newblock Approximate formula and bounds for the time-varying
  susceptible-infected-susceptible prevalence in networks.
\newblock {\em Physical Review E}, 93(5):052312, 2016.

\bibitem{VanMieghem2014}
Piet Van~Mieghem.
\newblock Exact markovian sir and sis epidemics on networks and an upper bound
  for the epidemic threshold.
\newblock {\em arXiv preprint arXiv:1402.1731}, 2014.

\bibitem{VanMieghem2012a}
Piet Van~Mieghem.
\newblock Exact markovian sir and sis epidemics on networks and an upper bound
  for the epidemic threshold.
\newblock {\em arXiv preprint arXiv:1402.1731}, 2014.

\bibitem{CatorPositivecorrelations}
E.~Cator and P.~Van~Mieghem.
\newblock Nodal infection in {M}arkovian {SIS} and {SIR} epidemics on networks
  are non-negatively correlated.
\newblock {\em Physical Review E}, 89(5):052802, 2014.

\bibitem{Inhom}
Piet Van~Mieghem and Jasmina Omic.
\newblock In-homogeneous virus spread in networks.
\newblock {\em arxiv: 1306.2588}, 2013.

\bibitem{Stab}
Ana Lajmanovich and James~A. Yorke.
\newblock A deterministic model for {G}onorrhea in a non-homogeneous
  population.
\newblock {\em Mathematical Biosciences}, 28(34):221 -- 236, 1976.

\bibitem{hupert2004community}
Nathaniel Hupert, Jason Cuomo, Mark~A Callahan, Alvin~I Mushlin, and Stephen~S
  Morse.
\newblock {\em Community-based mass prophylaxis: A planning guide for public
  health preparedness}.
\newblock US Department of Health and Human Services, Agency for Healthcare
  Research and Quality, 2004.

\bibitem{boyd96}
S.~P. Boyd.
\newblock Semidefinite programming.
\newblock {\em SIAM review}, 38:49--95, 1994.

\bibitem{boyd}
S.P. Boyd and L.~Vandenberghe.
\newblock {\em Convex optimization}.
\newblock Cambridge University Press, 2004.

\bibitem{berman1994nonnegative}
Abraham Berman and Robert~J Plemmons.
\newblock {\em Nonnegative matrices in the mathematical sciences}.
\newblock SIAM, 1994.

\bibitem{zhang2011matrix}
Fuzhen Zhang.
\newblock {\em Matrix theory: basic results and techniques}.
\newblock Springer Science \& Business Media, 2011.

\bibitem{SDPT3}
R.~H. T{\"u}t{\"u}nc{\"u}, K.~C. Toh, and M.~J. Todd.
\newblock Solving semidefinite-quadratic-linear programs using {SDPT3}.
\newblock {\em Mathematical Programming}, 95(2):189--217, 2003.

\bibitem{pellis2015seven}
Frank Ball, Tom Britton, Thomas House, Valerie Isham, Denis Mollison, Lorenzo
  Pellis, and Gianpaolo~Scalia Tomba.
\newblock Seven challenges for metapopulation models of epidemics, including
  households models.
\newblock {\em Epidemics}, 10:63--67, 2015.

\bibitem{Hanski}
Ilkka Hanski and Otso Ovaskainen.
\newblock Metapopulation theory for fragmented landscapes.
\newblock {\em Theoretical population biology}, 64(1):119--127, 2003.

\bibitem{Masuda2010}
Naoki Masuda.
\newblock Effects of diffusion rates on epidemic spreads in metapopulation
  networks.
\newblock {\em New Journal of Physics}, 12(9):093009, 2010.

\bibitem{ball1997epidemics}
Frank Ball, Denis Mollison, and Gianpaolo Scalia-Tomba.
\newblock Epidemics with two levels of mixing.
\newblock {\em The Annals of Applied Probability}, pages 46--89, 1997.

\bibitem{ross2010calculation}
Joshua~V Ross, Thomas House, and Matt~J Keeling.
\newblock Calculation of disease dynamics in a population of households.
\newblock {\em PLoS One}, 5(3):e9666, 2010.

\bibitem{ball2002general}
Frank Ball and Peter Neal.
\newblock A general model for stochastic sir epidemics with two levels of
  mixing.
\newblock {\em Mathematical biosciences}, 180(1):73--102, 2002.

\bibitem{pellis2012reproduction}
Lorenzo Pellis, Frank Ball, and Pieter Trapman.
\newblock Reproduction numbers for epidemic models with households and other
  social structures. i. definition and calculation of r 0.
\newblock {\em Mathematical biosciences}, 235(1):85--97, 2012.

\bibitem{ball2008network}
Frank Ball and Peter Neal.
\newblock Network epidemic models with two levels of mixing.
\newblock {\em Mathematical biosciences}, 212(1):69--87, 2008.

\bibitem{frank2009}
Ball Frank, Sirl David, Trapman Pieter, et~al.
\newblock Threshold behaviour and final outcome of an epidemic on a random
  network with household structure.
\newblock {\em Advances in Applied Probability}, 41(3):765--796, 2009.

\bibitem{wang2013effect}
Huijuan Wang, Qian Li, Gregorio D?Agostino, Shlomo Havlin, H~Eugene Stanley,
  and Piet Van~Mieghem.
\newblock Effect of the interconnected network structure on the epidemic
  threshold.
\newblock {\em Physical Review E}, 88(2):022801, 2013.

\bibitem{Bonaccorsi}
Stefano Bonaccorsi, Stefania Ottaviano, Francesco {De Pellegrini}, Annalisa
  Socievole, and Piet Van~Mieghem.
\newblock Epidemic outbreaks in two-scale community networks.
\newblock {\em Phys. Rev. E}, 90:012810, Jul 2014.

\bibitem{girvan2002community}
Michelle Girvan and Mark~EJ Newman.
\newblock Community structure in social and biological networks.
\newblock {\em Proceedings of the national academy of sciences},
  99(12):7821--7826, 2002.

\bibitem{stewart2003symmetry}
Ian Stewart, Martin Golubitsky, and Marcus Pivato.
\newblock Symmetry groupoids and patterns of synchrony in coupled cell
  networks.
\newblock {\em SIAM Journal on Applied Dynamical Systems}, 2(4):609--646, 2003.

\bibitem{golubitsky2005patterns}
Martin Golubitsky, Ian Stewart, and Andrei T{\"o}r{\"o}k.
\newblock Patterns of synchrony in coupled cell networks with multiple arrows.
\newblock {\em SIAM Journal on Applied Dynamical Systems}, 4(1):78--100, 2005.

\bibitem{rahmani2009controllability}
Amirreza Rahmani, Meng Ji, Mehran Mesbahi, and Magnus Egerstedt.
\newblock Controllability of multi-agent systems from a graph-theoretic
  perspective.
\newblock {\em SIAM Journal on Control and Optimization}, 48(1):162--186, 2009.

\bibitem{aguilar2016almost}
Cesar~O Aguilar and Bahman Gharesifard.
\newblock On almost equitable partitions and network controllability.
\newblock In {\em American Control Conference (ACC), 2016}, pages 179--184.
  IEEE, 2016.

\bibitem{Nemirovski}
Arkadi Nemirovski.
\newblock Advances in convex optimization: Conic programming.
\newblock In {\em In Proceedings of International Congress of Mathematicians},
  pages 413--444, 2007.

\bibitem{barabasi1999}
Albert-L{\'a}szl{\'o} Barab{\'a}si and R{\'e}ka Albert.
\newblock Emergence of scaling in random networks.
\newblock {\em science}, 286(5439):509--512, 1999.

\bibitem{Li2012}
Cong Li, Ruud van~de Bovenkamp, and Piet Van~Mieghem.
\newblock Susceptible-infected-susceptible model: A comparison of n-intertwined
  and heterogeneous mean-field approximations.
\newblock {\em Physical Review E}, 86(2):026116, 2012.

\bibitem{Aho_DesiCompAlgos}
{A. V.} Aho, {J. E.} Hopcroft, and {J. D.} Ullman.
\newblock {\em The Design and Analysis of Computer Algorithms}.
\newblock Addison-Wesley, 1974.

\bibitem{horn}
Roger~A. Horn and Charles~R. Johnson.
\newblock {\em Matrix Analysis}.
\newblock Cambridge University Press, New York, NY, USA, 2012.

\bibitem{varga2009matrix}
Richard~S Varga.
\newblock {\em Matrix iterative analysis}, volume~27.
\newblock Springer Science \& Business Media, 2009.

\end{thebibliography}

\end{document}